\theoremstyle{plain}
\newtheorem{theorem}{Theorem}[section]
\newtheorem{lemma}[theorem]{Lemma}
\newtheorem{corollary}[theorem]{Corollary}
\newtheorem{definition}[theorem]{Definition}
\newtheorem{example}[theorem]{Example}
\newtheorem{remark}[theorem]{Remark}
\newtheorem{proposition}[theorem]{Proposition}
\newtheorem{assumption}[theorem]{Assumption}
\newtheorem{claim}[theorem]{Claim}
\numberwithin{equation}{section}
\renewcommand{\div}{\operatorname{div}\!}
\DeclareMathOperator{\sgn}{sgn}
\renewcommand{\Im}{\operatorname{\mathfrak{Im}}}
\renewcommand{\Re}{\operatorname{\mathfrak{Re}}}
\newcommand{\supp}{\operatorname{supp}}
\newcommand{\bnd}{\operatorname{\mathcal{L}}}
\newcommand{\doms}[2]{\textbf{D}_{#1}\left(#2\right)} 
\newcommand{\kers}[2]{\textbf{N}_{#1}\left(#2\right)}
\newcommand{\ran}[1]{\textbf{R}\left(#1\right)}
\newcommand{\rans}[2]{\textbf{R}_{#1}\left(#2\right)} 
\newcommand{\crans}[2]{\overline{\textbf{R}_{#1}\left(#2\right)}}
\newcommand{\proj}[1]{\mathbb{P}_{#1}}
\newcommand{\bisec}[1]{S_{#1}}
\newcommand{\opbisec}[1]{\dot{S}_{#1}}
\newcommand{\opsec}[2]{\dot{S}_{#2}^{#1}}
\newcommand{\Hinf}[1]{H^{\infty}\big(\opbisec{#1}\big)}
\newcommand{\Hinfzer}[1]{H^{\infty}\big(\opbisec{#1},\{0\}\big)}
\newcommand{\Psidec}[1]{\Psi\big(\opbisec{#1}\big)}
\newcommand{\Psidecpar}[3]{\Psi_{#1}^{#2}\big(\opbisec{#3}\big)}
\newcommand{\LR}[2]{L^{#1}\left(\mathbb{R}^{#2}\right)}
\newcommand{\LRC}[3]{L^{#1}\left(\mathbb{R}^{#2},\mathbb{C}^{#3}\right)}
\def\Xint#1{\mathchoice
{\XXint\displaystyle\textstyle{#1}}%
{\XXint\textstyle\scriptstyle{#1}}%
{\XXint\scriptstyle\scriptscriptstyle{#1}}%
{\XXint\scriptscriptstyle\scriptscriptstyle{#1}}%
\!\int}
\def\XXint#1#2#3{{\setbox0=\hbox{$#1{#2#3}{\int}$ }
\vcenter{\hbox{$#2#3$ }}\kern-.6\wd0}}
\def\dashint{\Xint-}
\begin{document}
\title[$L^{p}-L^{q}$ theory]{$L^{p}-L^{q}$ theory for holomorphic functions of perturbed first order Dirac operators}
\author{Sebastian Stahlhut}
\address{Univ. Paris-Sud, laboratoire de Math\'ematiques, UMR 8628 du CNRS, F-91405 {\sc Orsay}}%
\email{sebastian.stahlhut@math.u-psud.fr}%

\date{September 9,  2014}
\subjclass{Primary 47A60, 42B37; Secondary 47E05, 47F05;} %
\keywords{First order differential operator, first order Dirac operator, off-diagonal estimates, holomorphic functional calculus, $L^{p}-L^{q}$ estimates.}%

\begin{abstract}
The aim of the article is to prove $L^{p}-L^{q}$ off-diagonal estimates and $L^{p}-L^{q}$ boundedness for operators in the functional calculus of certain perturbed first order differential operators of Dirac type for  with $p\le q$ in a certain range of exponents. We describe the $L^{p}-L^{q}$ off-diagonal estimates and the $L^{p}-L^{q}$ boundedness in terms of the decay properties of the related holomorphic functions and give a necessary condition for $L^{p}-L^{q}$ boundedness. Applications to Hardy-Littlewood-Sobolev estimates for fractional operators will be given.
\end{abstract}
\maketitle

\section{Introduction}
In this article, we are interested in $L^{p}-L^{q}$ estimates for operators defined by the functional calculus of certain first order differential operators of Dirac type. Let us start with an example. In one dimension $\left(Id-it\frac{d}{dx}\right)^{-1}$, where $-i\frac{d}{dx}$ is defined as a self-adjoint operator in $\LR{2}{}$ and $t\in \mathbb{R}$, is known to have a kernel $\frac{1}{2|t|}e^{-\frac{|x-y|}{|t|}}$, hence it is bounded from $\LR{p}{}$ to $\LR{q}{}$ for any $1\leq p\leq q\leq \infty$. In higher dimensions the operators $\left(Id+itD\right)^{-1}$, where $\displaystyle D=\begin{pmatrix} 0 & \div \\ -\nabla & 0 \end{pmatrix}$, are examples of bounded operators on $\LRC{p}{n}{N}$ but not from $\LRC{p}{n}{N}$ to $\LRC{q}{n}{N}$ $p,q\in \left(1,\infty\right)$ with $p< q$. However, the operator $tD\left(Id+itD\right)^{-2}$ satisfies for all Borel sets $E,F\subset \mathbb{R}^{n}$, all $u\in \LRC{p}{n}{N}$ and for certain values $K\in\left[0,\infty\right)$ and $q>p$, 
\begin{align*}
||tD\left(Id+itD\right)^{-2}\chi_{E}u||_{L^{q}\left(F\right)}\lesssim |t|^{\frac{n}{q}-\frac{n}{p}}\left(1+\frac{d\left(E,F\right)}{|t|}\right)^{-K} ||u||_{L^{p}\left(E\right)},
\end{align*}
where $d\left(E,F\right):=\inf\left\{|x-y||x\in E,\ y\in F\right\}$ is the distance between the sets $E$ and $F$ and $\chi_{E}$ denotes the characteristic function of $E$.

Here, we want to explore this phenomenon for perturbed first order Dirac operators $DB$ and $BD$ (see below for definitions). The off-diagonal estimates are important, when one seeks to prove, for example, 
\begin{align*}
\left(\int_{\mathbb{R}^{n}}\left(\int_{0}^{\infty}\int_{B\left(x,t\right)}|tDB\left(Id+itDB\right)^{-2}u\left(y\right)|^{2}\frac{dydt}{t^{n+1}}\right)^{p/2}dx\right)^{\frac{1}{p}}\lesssim ||u||_{L^{p}}
\end{align*}
for certain values of $p$, where $B\left(x,t\right)\subset \mathbb{R}^{n}$ denotes the ball of radius $t$ and center $x$. In fact, this will be the application in the subsequent work \cite{Aus-Sta-2} of Pascal Auscher and the author.

The notion of $L^{2}-L^{2}$ off-diagonal estimates arises from \cite{Dav} and \cite{Gaf}. Such off-diagonal estimates were proved and used for second order elliptic operators for the solution of the Kato square root problem in \cite{AHLMT} and used to compensate the  lack of pointwise kernel estimates. In \cite{Hof-Mar} such $L^{2}-L^{2}$ off-diagonal estimates were used to prove certain $L^{p}$ bounds Riesz transform associated to second order elliptic operators. It was in \cite{Blu-Kun} that  the  $L^p-L^q$ version of those were used to prove $L^p$ estimates in absence of pointwise bounds.  In \cite{Blu-Kun}, \cite{Aus-2}, \cite{Hof-May-McI}, \cite{Hof-May}, \cite{Aus-Hof-Mar} and \cite{Aus-1}, $L^{p}-L^{q}$ off-diagonal estimates for semigroup and resolvent of elliptic second order differential operators were used to prove square root estimates, boundedness for square functions, certain maximal functions, Riesz transforms, ect. Similar work was done in the context of self-adjoint operators in \cite{Hof-Lu-Mit-Yan}, \cite{Aus-McI-Mor} and \cite{Bui-Duo}. Another line of developments was a generalized Calder\'on-Zygmund theory for operators which do not satisfy kernel estimates, where one used off-diagonal estimates as replacement (cf. \cite{Blu-Kun}, \cite{Aus-1}, \cite{Aus-Mar-1}, \cite{Aus-Mar-2}, \cite{Ber}, \cite{Fre-Kun}, \cite{Ber-Fre} etc.). As consequence of the solution of the Kato square root problem, interest arises also in square function estimates for first order Dirac operators. In \cite{Axe-Kei-McI} and \cite{Aus-Axe-McI-2} vertical square function estimates were proved using $L^{2}-L^{2}$ off-diagonal estimates for the resolvent. In \cite{Aus-McI-Rus} such off-diagonal estimates for the resolvent were used to develop a Hardy space theory associated to Hodge-Dirac operators on manifolds. In \cite{Hyt-McI} and \cite{Hyt-McI-Por} $L^{p}-L^{p}$ off-diagonal estimates for the resolvent of first order Dirac operators were applied to prove an extrapolation theorem for $R$-bisectoriality and to prove equivalence of $R$-bisectoriality and holomorphic functional calculus on intervals of Lebesgue exponents. \cite{Aji} even introduced an $L^{p}-L^{q}$ theory for first order Dirac operators under certain restrictions, which we remove here.

Our plan is as follows. In the second section, we introduce the basic notions. In the third section, we discuss our main results. In the first part, we give sufficient conditions for $L^{p}-L^{q}$ off-diagonal estimates and $L^{p}-L^{q}$ boundedness for operators in the functional calculus of these perturbed first order Dirac operators in terms of decay properties at $0$ and $\infty$ for the associated holomorphic functions. In particular, we give a relation between the decay properties for the associated holomorphic functions and the number $K$ above. These results will be given in Propositions \ref{proposition3.3} and \ref{proposition3.8} below. Corollary \ref{corollary3.13} gives a version for bounded holomorphic functions, which have no decay at $0$. These results are partially contained in the work of \cite{Aji} when the range of the perturbed first order Dirac operator is stable under multiplication by smooth cut-off functions/cut-off functions. In the second part of Section 3, we discuss when this is the case. We give a condition in Proposition \ref{proposition..3.18} that shows that for the operators $D$ and $DB$, Ajiev's results may not be always applicable, in particular not for $\displaystyle D=\begin{pmatrix} 0 & \div \\ -\nabla & 0 \end{pmatrix}$ as above, whereas ours are. In the third part of Section 3, we give a necessary condition for $L^{p}-L^{q}$ boundedness when $p<q$, which highlights the connection of $L^{p}-L^{q}$ boundedness to kernel/range decompositions. In particular, this condition shows that the operators $\left(Id+itD\right)^{-1}$, $\left(Id+itDB\right)^{-1}$ and $\left(Id+itBD\right)^{-1}$ are not $L^{p}-L^{q}$ bounded for the particular $\displaystyle D=\begin{pmatrix} 0 & \div \\ -\nabla & 0 \end{pmatrix}$ and $p<q$. Also, this condition shows that the semi-groups $e^{-t|D|}$, $e^{-t|DB|}$ and $e^{-t|BD|}$, where $|.|=\sqrt{\left(.\right)^{2}}$, are not $L^{p}-L^{q}$ bounded for this particular $D$. The last part of Section 3 concers analytic extensions of our results to complex times $t$. Finally, we give an application of $L^{p}-L^{q}$ boundedness to estimates for fractional operators related to $D$, $DB$ and $BD$ in Section 4.

\section{Setting}
\subsection{Definitions and Notation}
Let $1<q<\infty$, $n,N\in\mathbb{N}^{*}$. By an unbounded operator on $L^{q}:=\LRC{q}{n}{N}$ we mean a linear map $T:\doms{q}{T}\rightarrow L^{q}$ with domain $\doms{q}{T}\subset L^{q}$. We denote the null space by $\kers{q}{T}$ and the range by $\rans{q}{T}$. We say that $T$ admits a kernel/range decomposition in $L^{q}$ whenever 
\begin{align}\label{EQ.2.0}
L^{q}=\kers{q}{T}\oplus\crans{q}{T},
\end{align}
where the sum is topological and $\crans{q}{T}$ is the closure of $\rans{q}{T}$ in $L^{q}$. A class of operators which admit a kernel/range decomposition are bisectorial operators. We say that a linear operator $T$ is bisectorial of type $\omega\in \left[0,\frac{\pi}{2}\right)$ if $T$ is closed, the spectrum of $T$ is contained in a bisector $\bisec{\omega}:=\left\{\lambda\in \mathbb{C}\backslash\left\{0\right\}:|\arg \lambda|\leq \omega \text{ or } |\arg \left(-\lambda\right)|\leq \omega\right\}\cup\left\{0\right\}$ and for each $\nu \in \left(\omega,\frac{\pi}{2}\right)$ there exists a constant $C_{\nu}>0$ such that
\begin{align}\label{EQ.2.1}
||\left(Id+\lambda T\right)^{-1}||_{L^{q}\rightarrow L^{q}}\leq C_{\nu}
\end{align}
for all $\lambda \in \mathbb{C}\backslash \bisec{\nu}$.
The bound (\ref{EQ.2.1}) allows one to define a functional calculus. To $\sigma>0$, $\tau>0$ and $\nu\in \left(0,\frac{\pi}{2}\right)$ we define $\Psidecpar{\sigma}{\tau}{\nu}$ to be the set of all holomorphic functions $\psi:\opbisec{\nu}\rightarrow \mathbb{C}$ such that 
\begin{align}\label{EQ.2.2}
|\psi\left(\lambda\right)|\lesssim \frac{|\lambda|^{\sigma}}{1+|\lambda|^{\sigma+\tau}}.
\end{align}
for all $\lambda\in \opbisec{\nu}:=\left\{\lambda\in \mathbb{C}\backslash\left\{0\right\}:|\arg \lambda|< \omega \text{ or } |\arg \left(-\lambda\right)|< \omega\right\}$.
Moreover, we define $\Psidec{\nu}:=\bigcup_{\sigma,\tau>0}\Psidecpar{\sigma}{\tau}{\nu}$ and set $\psi\left(0\right)=0$ when $\psi \in \Psidec{\nu}$. Having these definitions in hand we can define a functional calculus as follows. Let $T$ be a bisectorial operator of type $\omega\in \left[0,\frac{\pi}{2}\right)$ on $L^{q}$, then for each $\nu\in \left(\omega,\frac{\pi}{2}\right)$ the Dunford integral 
\begin{align}
\psi\left(T\right)&:=\frac{1}{2\pi i}\int_{\partial \bisec{\theta}}\psi\left(\lambda\right)\left(Id-\lambda^{-1}T\right)^{-1}\frac{d\lambda}{\lambda} \label{EQ.2.3}
\end{align}
defined as improper Riemann integral converges normally in $\bnd\left(L^{q}\right)$ for all $\theta\in \left(\omega,\nu\right)$, where $\partial \bisec{\theta}:=\left\{\pm te^{\pm i\theta}:t\in \left(0,\infty\right)\right\}$ is oriented counterclockwise on the four branches surrounding $S_{\omega}$. We say that a bisectorial operator $T$ of type $\omega$ has a bounded holomorphic functional calculus, if for each $\nu\in\left(\omega,\frac{\pi}{2}\right)$ there exists a constant $C_{\nu}>0$ such that for all $\psi\in\Psidec{\nu}$ and all $u\in L^{q}$ holds
\begin{align*}
||\psi\left(T\right)u||_{L^{q}}\leq C_{\nu}||\psi||_{\Hinf{\nu}}||u||_{L^{q}}.
\end{align*}
Whenever this is the case the bounded holomorphic functional calculus may be extended to the class $\Hinf{\nu}$ by a limiting procedure and to the class $\Hinfzer{\nu}$ using the kernel/range decomposition of $T$. 
Indeed in this case, for $f\in \Hinfzer{\nu}$ we define 
\begin{align}\label{EQ.2.4}
f\left(T\right)u:=f\left(0\right)u_{N}+f|_{\opbisec{\nu}}\left(T\right)u_{R}
\end{align}
where $u_{N},u_{R}$ denote the projections of $u$ onto null space and closure of the range of $T$ according to (\ref{EQ.2.0}).
Here, $\Hinf{\nu}$ is the set of all bounded holomorphic functions $f:\opbisec{\nu}\rightarrow \mathbb{C}$ with  norm $||f||_{\Hinf{\nu}}:=\sup_{z\in \opbisec{\nu}}|f\left(z\right)|$ and  $\Hinfzer{\nu}$ is the set of all bounded functions $f:\opbisec{\nu}\cup\left\{0\right\}\rightarrow \mathbb{C}$ with norm $||f||_{\Hinfzer{\nu}}:=\sup_{z\in \opbisec{\nu}\cup\left\{0\right\}}|f\left(z\right)|$ such that the restriction $f|_{\opbisec{\nu}}$ is holomorphic. For more details on kernel/range decompositions, bisectorial operators and functional calculus we refer the reader to \cite{Cow-Dou-McI-Yag}, \cite{LeM}, \cite{Haa}, \cite{McI} and the references therein.
\newline
We fix $p,q$ with $1<p\leq q<\infty$ in the sequel. In the following we are interested in the following three boundedness properties for families of operators.

\begin{definition}[Boundedness for families of operators]
Let $\mathcal{A}\subset \mathbb{C}\backslash \left\{0\right\}$ be a subset of the complex plane and $U_{p}\subset L^{p}$, $U_{q}\subset L^{q}$ be closed subspaces. We say that a family of operators $\left\{T_{\lambda}\right\}_{\lambda\in\mathcal{A}}$ is $U_{p}-U_{q}$ bounded if for all $\lambda\in \mathcal{A}$ and all $u\in U_{p}$ holds $T_{\lambda}u\in U_{q}$ and there exists a constant $C_{p,q}>0$ such that for all $\lambda\in \mathcal{A}$ and all $u\in U_{p}$ holds 
\begin{align*}
||T_{\lambda}u||_{L^{q}}\leq C_{p,q} |\lambda|^{\frac{n}{q}-\frac{n}{p}}||u||_{L^{p}}.
\end{align*}
\end{definition}

\begin{definition}[Off-diagonal estimates for families of operators]
Let $\mathcal{A}\subset \mathbb{C}\backslash \left\{0\right\}$ be a subset of the complex plane. We say that a family of operators $\left\{T_{\lambda}\right\}_{\lambda\in\mathcal{A}}$ satisfies $L^{p}-L^{q}$ off-diagonal estimates of order $K\in \left[0,\infty\right)$ if there exists a constant $C_{K,p,q}>0$ such that for all Borel sets $E,F\subset \mathbb{R}^{n}$, all $\lambda\in \mathcal{A}$ and all $u\in L^{p}$ holds 
\begin{align*}
||\chi_{E}T_{\lambda}\left(\chi_{F}u\right)||_{L^{q}}\leq C_{K,p,q} |\lambda|^{\frac{n}{q}-\frac{n}{p}}\left(1+\frac{d\left(E,F\right)}{|\lambda|}\right)^{-K}||\chi_{F}u||_{L^{p}},
\end{align*}
where $d\left(E,F\right):=\inf_{x\in E, y\in F}|x-y|_{\mathbb{R}^{n}}$ denotes the distance between $E$ and $F$ and $\chi_{E}$ denote the characteristic functions of a set $E$.
\end{definition}

\begin{definition}[Biparameter off-diagonal estimates for families of operators]
Let $\mathcal{A},\mathcal{B}\subset \mathbb{C}\backslash \left\{0\right\}$ be two subsets of the complex plane. We say that a family of operators $\left\{T_{\lambda_{1},\lambda_{2}}\right\}_{\left(\lambda_{1},\lambda_{2}\right)\in\mathcal{A}\times \mathcal{B}}$ satisfies $L^{p}-L^{q}$ biparameter off-diagonal estimates in $\left(\lambda_{1},\lambda_{2}\right)$ of order $K\in \left[0,\infty\right)$ if there exists a constant $C_{K,p,q}>0$ such that for all Borel sets $E,F\subset \mathbb{R}^{n}$, all $\left(\lambda_{1},\lambda_{2}\right)\in\mathcal{A}\times \mathcal{B}$ and all $u\in L^{p}$ holds 
\begin{align*}
||\chi_{E}T_{\lambda_{1},\lambda_{2}}\left(\chi_{F}u\right)||_{L^{q}}\leq C_{K,p,q} |\lambda_{1}|^{\frac{n}{q}-\frac{n}{p}}\left(1+\frac{d\left(E,F\right)}{|\lambda_{2}|}\right)^{-K}||\chi_{F}u||_{L^{p}}.
\end{align*}
\end{definition}

\subsection{First order Dirac operators}
We are interested in families of operators defined by the bounded holomorphic functional calculus of the following special class of bisectorial operators.

\begin{assumption}\label{Assumption2.4}
Let $n,N \in \mathbb{N}^{*}$. Let $D$ be a first order differential operator on $\mathbb{R}^{n}$ acting on functions valued in $\mathbb{C}^{N}$ that satisfies the conditions (D0), (D1) and (D2) in \cite{Hyt-McI}. These are
\begin{itemize}
\item[(1)] $D$ has the representation $D=-i\sum_{j=1}^{n}\hat{D}_{j}\partial_{j}$ with matrices $\hat{D}_{j}\in \mathcal{L}\left(\mathbb{C}^{N}\right)$,
\item[(2)] There exists $\kappa>0$ such that the symbol $\hat{D}\left(\xi\right)=\sum_{j=1}^{n}\hat{D}_{j}\xi_{j}$ satisfies $\kappa|\xi||e|\leq |\hat{D}\left(\xi\right)e|$ for all $\xi\in \mathbb{R}^{n}$ and all $e\in \ran{\hat{D}\left(\xi\right)}$,  
\item[(3)] There exists $\omega_{D}\in \left[0,\frac{\pi}{2}\right)$ such that the spectrum of the symbol satisfies $\sigma\left(\hat{D}\left(\xi\right)\right)\subset \bisec{\omega_{D}}$.
\end{itemize}
Further, let $B$ be the operator defined via pointwise multiplication by the matrix function $B\left(x\right)$, $x\in \mathbb{R}^{n}$, with $B\in L^{\infty}\left(\mathbb{R}^{n},\mathcal{L}\left(\mathbb{C}^{N}\right)\right)$. We assume additionally one of the following equivalent conditions: 
\begin{enumerate}
\item[(4)] Assume $B$ satisfies the coercivity condition $||Bu||_{L^{2}}\gtrsim ||u||_{L^{2}}$ for all $u\in \crans{2}{D}$ and there exists $\omega\in \left[0,\frac{\pi}{2}\right)$ such that $BD$ is bisectorial of type $\omega$ on $L^{2}$,
\item[(5)] Assume $B^{*}$ satisfies the coercivity condition $||B^{*}u^{*}||_{L^{2}}\gtrsim ||u^{*}||_{L^{2}}$ for all $u^{*}\in \crans{2}{D^{*}}$ and there exists $\omega\in \left[0,\frac{\pi}{2}\right)$ such that $DB$ is bisectorial of type $\omega$ on $L^{2}$.
\end{enumerate}
\end{assumption}
\textbf{In the sequel, we shall systematically assume without mention that Assumption \ref{Assumption2.4} holds in all statements involving $DB$ or $BD$.}
\begin{example}
The operators $D$ and $B$ appearing in the works \cite{Aus-Axe-McI-1} and \cite{Aus-Axe} satisfy Assumption \ref{Assumption2.4}. Further examples are in \cite{Hyt-McI-Por} and \cite{Hyt-McI}. 
\end{example}

Here, we do not assume that $D$ is self-adjoint or that $B$ satisfies a strictly accretivity condition. The equivalence of the conditions (4) and (5) was proven in \cite{Aus-Sta-1}. The first consequence of Assumption \ref{Assumption2.4} is the following proposition due to \cite{Hyt-McI-Por}.
\begin{proposition}\label{proposition2.9} 
Let $1<q<\infty$.
\begin{enumerate}
  \item \label{Enu1prop2.9} $D$ is a bisectorial operator of type $\omega_{D}$ with bounded holomorphic functional calculus in $L^{q}$.
\item \label{Enu2prop2.9} $L^{q}=\kers{q}{D}\oplus \crans{q}{D}$, i.e. $D$ admits a kernel/range decomposition on $\LRC{q}{n}{N}$,
  \item \label{Enu3prop2.9} $\kers{q}{D}$ and $\crans{q}{D}$, $1<q<\infty$, are complex interpolation families.
  \item \label{Enu4prop2.9} $D$ satisfies the coercivity condition
\begin{align*}
  ||\nabla u ||_{L^{q}\left(\mathbb{R}^{n},\mathbb{C}^{n}\otimes\mathbb{C}^{N}\right)} \lesssim ||Du||_{L^{q}\left(\mathbb{R}^{n},\mathbb{C}^{N}\right)}\quad \text{ for all } u \in \doms{q}{D}\cap \crans{q}{D} \subset W^{1,q}. 
\end{align*}  
Here, we use the notation $\nabla u$ for $\nabla \otimes u$ and $||u||_{W^{1,q}}=||u||_{L^{q}}+||\nabla u||_{L^{q}}$. 
\item \label{Enu5prop2.9} The same properties hold for the adjoint $D^{*}$.
\end{enumerate}
\end{proposition}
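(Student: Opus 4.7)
\emph{Plan of proof.} The plan is to exploit the fact that $D$ has constant coefficients, so that $D$ and every operator in its functional calculus acts as a Fourier multiplier with matrix-valued symbol. The central tool is the Mikhlin--H\"ormander multiplier theorem on $\LRC{q}{n}{N}$, $1<q<\infty$, applied to symbols that are positively homogeneous of degree zero in $\xi$ and smooth off the origin.

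For \eqref{Enu1prop2.9} I would fix $\nu\in(\omega_{D},\pi/2)$ and observe that, by (D2), for every $\xi\neq 0$ the matrix $\hat D(\xi)$ has spectrum in $\bisec{\omega_{D}}$, so the resolvents $(Id+\lambda\hat D(\xi))^{-1}$ are defined and bounded uniformly in $\xi$ for $\lambda\notin \bisec{\nu}$; more generally $\psi(\hat D(\xi))$ is defined via Dunford's integral for $\psi\in \Psidec{\nu}$. Because $\hat D$ is linear in $\xi$, these symbols depend on $\xi$ only through the degree-one homogeneous matrix $\hat D(\xi)$, and a rescaling $\xi\mapsto\lambda\xi$ reduces the whole family to the single symbol $(Id+\hat D(\cdot))^{-1}$; the Mikhlin bounds $|\partial^{\alpha}_\xi m(\xi)|\lesssim_\alpha |\xi|^{-|\alpha|}$ then follow from this scale invariance together with the uniform spectral gap supplied by (D1)--(D2), which keeps the Dunford contour a fixed positive distance from $\sigma(\hat D(\xi))$ on the unit sphere. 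A standard convergence lemma then extends the resulting bound from $\Psidec{\nu}$ to $\Hinf{\nu}$, giving bisectoriality of $D$ on $L^{q}$ together with the bounded $H^{\infty}$ calculus.

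For \eqref{Enu2prop2.9} and \eqref{Enu3prop2.9}, the spectral projections $P_{N}(\xi)$ and $P_{R}(\xi)$ of $\hat D(\xi)$ onto $\ker{\hat D(\xi)}$ and $\ran{\hat D(\xi)}$ are themselves smooth, degree-zero homogeneous symbols---the uniform gap in (D1) keeps the dimensions of $\ker{\hat D(\xi)}$ and $\ran{\hat D(\xi)}$ constant and makes the Riesz projections holomorphic in $\xi$---so they induce $L^{q}$-bounded projections with ranges $\kers{q}{D}$ and $\crans{q}{D}$, yielding the topological sum. Because these are the \emph{same} Fourier multipliers on every $L^{q}$, the families $\{\kers{q}{D}\}_{q}$ and $\{\crans{q}{D}\}_{q}$ form complex interpolation families by the standard theorem that the range of a compatible bounded projection on $L^{q_{0}}$ and $L^{q_{1}}$ interpolates to the range of the same projection on the intermediate $L^{q}$. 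For \eqref{Enu4prop2.9}, assumption (D1) gives the pointwise bound $|\hat D(\xi)^{-1}e|\leq (\kappa|\xi|)^{-1}|e|$ on $\ran{\hat D(\xi)}$, so the symbol $M_{j}(\xi):=i\xi_{j}\hat D(\xi)^{-1}P_{R}(\xi)$ is uniformly bounded, smooth, and degree-zero homogeneous. Since $\widehat{Du}(\xi)\in\ran{\hat D(\xi)}$, the identity $i\xi_{j}\hat u(\xi)=M_{j}(\xi)\widehat{Du}(\xi)$ holds for $u\in\doms{q}{D}\cap\crans{q}{D}$, and Mikhlin yields $||\grad u||_{L^{q}}\lesssim ||Du||_{L^{q}}$. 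Finally, \eqref{Enu5prop2.9} follows because $\hat D(\xi)^{*}$ inherits (D0)--(D2) with the same constants, so the preceding arguments apply verbatim to $D^{*}$.

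The principal technical obstacle is verifying the Mikhlin derivative estimates uniformly in the spectral parameter; the homogeneity of $\hat D$ is essential here, reducing every derivative bound to a statement on the compact sphere $\{|\xi|=1\}$ where the uniform spectral gap of (D1)--(D2) guarantees the Dunford integrand is manifestly $C^{\infty}$ in $\xi$ with controllable derivatives.
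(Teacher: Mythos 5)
The paper gives no proof of Proposition \ref{proposition2.9}; it appeals to \cite{Hyt-McI-Por}. Your strategy --- realizing every operator in the functional calculus of $D$ as a matrix-valued Fourier multiplier in the symbol $\hat D(\xi)$ and invoking Mikhlin--H\"ormander --- is exactly the route taken in that reference (and in \cite{Hyt-McI}), so you are reconstructing the argument the paper implicitly relies on, and your sketch is correct in substance.

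Two details deserve attention. The resolvent symbol $(Id+\hat D(\xi))^{-1}$ is not homogeneous of degree zero, so its Mikhlin bounds do not come from homogeneity alone: one should split it as $(Id-P(\xi))+P(\xi)(Id+\hat D(\xi))^{-1}$, where $Id-P(\xi)$ is the degree-zero-homogeneous Riesz projection onto $\ker{\hat D(\xi)}$ (smooth on $\xi\neq 0$ because (D1) keeps the nonzero spectrum at distance $\geq\kappa|\xi|$ from the origin), and the range part is estimated via a Dunford integral over a compact contour encircling the nonzero spectrum of $\hat D(\xi)$, which (D1)--(D2) confine to the annular sector $\left\{\kappa|\xi|\leq|z|\lesssim|\xi|\right\}\cap\bisec{\omega_{D}}$, uniformly away from the pole at $-1$. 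This same observation justifies, rather than presupposes, the constant rank of $\hat D(\xi)$ that you invoke: holomorphy of $P(\xi)$ on $\xi\neq 0$ gives locally constant rank, hence constant rank on $\mathbb{R}^{n}\setminus\{0\}$ when $n\geq 2$, and for $n=1$ oddness of $\hat D$ handles the two components of the punctured line. Secondly, (D1) for $\hat D(\xi)^{*}$ does not literally hold ``with the same constants'': since $\ker{\hat D(\xi)}$ and $\ran{\hat D(\xi)}$ need not be orthogonal, passing to the adjoint costs a factor controlled by $\|P(\xi)\|$, which (D2) bounds uniformly. Some $\kappa^{*}>0$ exists, which is all that is needed, but the phrase should be corrected. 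Apart from these cosmetic points the proposal is sound.
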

By \cite{Hyt-McI} and \cite[Proposition 2.1]{Aus-Sta-1}, it follows that the operators $BD$ and $DB$ have a meaning as unbounded operators in $L^{q}$ with natural domains $\doms{q}{D}$ and $B^{-1}\doms{q}{D}$, the preimage of $\doms{q}{D}$ under $B$.
Moreover, Assumption \ref{Assumption2.4} implies the existence of an open interval $\mathcal{I}_{2}\subset \left(1,\infty\right)$ containing $2$ such that for all $q\in \mathcal{I}_{2}$ holds $||Bu||_{L^{q}}\geq C||u||_{L^{q}}$ whenever $u\in \crans{q}{D}$ and $||B^{*}u^{*}||_{L^{q'}}\geq C||u^{*}||_{L^{q'}}$ whenever $u^{*}\in \crans{q'}{D^{*}}$. This was shown in \cite{Aus-Sta-1} and \cite{Hyt-McI} and used to extrapolate $R$-bisectoriality. As $L^{2}$-bisectoriality self-improves to $L^{2}$-$R$-bisectoriality\footnote{We do not introduce this notion here, as it is not of further interest.} we get from the works \cite{Aus-Sta-1}[Theorem 5.1], \cite[Lemma 2.4, Theorem 2.5]{Hyt-McI}, \cite[Corollary 8.17]{Hyt-McI-Por} and \cite[Theorem 5.3]{Kal-Wei} (to recall the main ingredients) the following theorem.
 
\begin{theorem}\label{theorem2.5}
There exists an open interval $\mathcal{I}_{D,B}=\left(p_{-}\left(D,B\right),p_{+}\left(D,B\right)\right)$ containing $2$ and maximal in $\mathcal{I}_{2}$ such that the following properties for $T\in \left\{BD,DB\right\}$ and $q\in \mathcal{I}_{D,B}$ hold:
\begin{enumerate}
\item \label{Enu2thm2.5} $T$ admits a kernel/range decomposition on $\LRC{q}{n}{N}$,
\item \label{Enu3thm2.5} $T$ is a bisectorial operator of type $\omega$ on $\LRC{q}{n}{N}$,
\item \label{Enu4thm2.5} $T$ has a bounded holomorphic functional calculus on $\LRC{q}{n}{N}$,
\item \label{Enu5thm2.5} for each $\nu \in \left(\omega,\frac{\pi}{2}\right)$ the family $\left\{\left(Id+\lambda T\right)^{-1}\right\}_{\lambda\in \mathbb{C}\backslash \bisec{\nu}}$ satisfies $L^{q}-L^{q}$ off-diagonal estimates of order $K$ for all $K\in \left[0,\infty\right)$.
\end{enumerate}
Moreover, all the properties (\ref{Enu2thm2.5}), (\ref{Enu3thm2.5}), (\ref{Enu4thm2.5}) and (\ref{Enu5thm2.5}) fail whenever $q=p_{\pm}\left(D,B\right)\in \mathcal{I}_{2}$.
\end{theorem}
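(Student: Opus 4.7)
The plan is to treat the theorem as an assembly of known ingredients, whose real work consists of pinning down a single maximal interval $\mathcal{I}_{D,B}$ on which all four properties hold simultaneously. By the equivalence of conditions~(4) and~(5) in Assumption~\ref{Assumption2.4} (proved in \cite{Aus-Sta-1}), the cases $T=BD$ and $T=DB$ are handled in parallel, duality exchanging the two. At $q=2$, Assumption~\ref{Assumption2.4} together with the quadratic-estimate machinery of \cite{Axe-Kei-McI,Aus-Axe-McI-2} provides bisectoriality of type $\omega$ and a bounded $H^{\infty}$-calculus on $L^{2}$, which on a Hilbert space upgrades automatically to $R$-bisectoriality via \cite[Theorem~5.3]{Kal-Wei}. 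The kernel/range decomposition on $L^{2}$ then follows from bisectoriality in a reflexive space, and the $L^{2}$--$L^{2}$ resolvent off-diagonal estimates of arbitrary order come from the standard Gaffney--Davies argument exploiting the algebraic structure of $D$ in items~(1)--(2) of Assumption~\ref{Assumption2.4}.

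I would next perform the extrapolation in $q$. The coercivity of $B$ on $\cran{D}$ self-improves to an $L^{q}$ coercivity on $\crans{q}{D}$ for $q$ in an open interval $\mathcal{I}_{2}\ni 2$, and similarly for $B^{*}$; this is recalled just before the theorem. Inside $\mathcal{I}_{2}$, I would apply the extrapolation of $R$-bisectoriality from \cite[Lemma~2.4, Theorem~2.5]{Hyt-McI} to obtain a maximal open subinterval $\mathcal{I}_{D,B}\subseteq\mathcal{I}_{2}$ on which $T$ is $R$-bisectorial on $L^{q}$, giving~(\ref{Enu3thm2.5}). Pairing this with the $L^{2}$ bounded $H^{\infty}$-calculus via \cite[Theorem~5.3]{Kal-Wei} delivers~(\ref{Enu4thm2.5}), and~(\ref{Enu2thm2.5}) follows from bisectoriality in the reflexive $L^{q}$. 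For~(\ref{Enu5thm2.5}), I would combine the $L^{2}$--$L^{2}$ off-diagonal estimates with the $L^{q}$ resolvent bounds from~(\ref{Enu3thm2.5}) via cut-off manipulations, the resolvent identity $(Id+\lambda T)^{-1}=Id-\lambda T(Id+\lambda T)^{-1}$, and Riesz--Thorin interpolation, following \cite[Corollary~8.17]{Hyt-McI-Por}; iteration produces arbitrary polynomial decay in $d(E,F)/|\lambda|$.

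The main obstacle is the coincidence of maximal intervals and the failure of all four properties at $q=p_{\pm}(D,B)\in\mathcal{I}_{2}$. Each of~(\ref{Enu2thm2.5})--(\ref{Enu5thm2.5}) is locally stable in $q$: if it holds on $L^{q_{0}}$ then it holds on a neighborhood of $q_{0}$, by standard Kalton--Weis-type perturbation and interpolation arguments. Defining $\mathcal{I}_{D,B}$ to be the maximal open subinterval of $\mathcal{I}_{2}$ on which~(\ref{Enu3thm2.5}) holds, the forward direction is given above, while any of~(\ref{Enu2thm2.5}),~(\ref{Enu4thm2.5}) or~(\ref{Enu5thm2.5}) at a putative endpoint $q_{0}=p_{\pm}(D,B)\in\mathcal{I}_{2}$ would, through the implication~(\ref{Enu4thm2.5})$\Rightarrow R$-bisectoriality or a direct perturbation argument, force~(\ref{Enu3thm2.5}) on a neighborhood of $q_{0}$, contradicting maximality. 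This simultaneous maximality is the essence of \cite[Corollary~8.17]{Hyt-McI-Por} and is the only genuinely non-bookkeeping input beyond the $L^{2}$ theory.
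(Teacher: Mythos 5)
Your proposal reconstructs the same assembly the paper performs: the paper states no detailed proof, only lists \cite{Aus-Sta-1}[Theorem 5.1], \cite[Lemma~2.4, Theorem~2.5]{Hyt-McI}, \cite[Corollary~8.17]{Hyt-McI-Por} and \cite[Theorem~5.3]{Kal-Wei} as the ingredients, and you identify the same extrapolation-from-$L^{2}$ skeleton and the same maximality argument. Two small attribution corrections are worth flagging: the $L^{2}$ bounded $H^{\infty}$-calculus under Assumption~\ref{Assumption2.4} should be credited to \cite{Aus-Sta-1}[Theorem 5.1] rather than to \cite{Axe-Kei-McI,Aus-Axe-McI-2}, since the latter assume $D$ self-adjoint and $B$ strictly accretive, which the paper explicitly dispenses with; and the self-improvement of $L^{2}$-bisectoriality to $L^{2}$-$R$-bisectoriality is automatic in a Hilbert space (a bounded family is $R$-bounded), not a consequence of \cite[Theorem~5.3]{Kal-Wei}, which is instead used to pass from $R$-bisectoriality on $L^{q}$ plus $H^{\infty}$-calculus on $L^{2}$ to $H^{\infty}$-calculus on $L^{q}$.
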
 
For a more complete and more general version of this theorem we refer the reader to 
\cite{Sta}. Finally let us make a few remarks in relation to Theorem \ref{theorem2.5}.

\begin{remark}\label{rem2.6}
For $T\in \left\{BD,DB\right\}$ and $q\in \mathcal{I}_{D,B}$ let $T_{q}$ be the $L^{q}$-realization. Then 
\begin{enumerate}
\item $\left(BD\right)_{q}=BD$ with domain $\doms{q}{BD}=\doms{q}{D}$.
\item $\left(DB\right)_{q}=DB$ with domain $\doms{q}{DB}=B^{-1}\doms{q}{D}$.
\item $T_{p}=T_{q}$ on $\doms{p}{T}\cap\doms{q}{T}$.
\item $f\left(T_{p}\right)=f\left(T_{q}\right)$ on $L^{p}\cap L^{q}$ for $f\in \Hinfzer{\nu}$.
\item $\mathbb{P}_{\crans{p}{T}}=\mathbb{P}_{\crans{q}{T}}$ and $\mathbb{P}_{\kers{p}{T}}=\mathbb{P}_{\kers{q}{T}}$ on $L^{p}\cap L^{q}$, where $\mathbb{P}_{\crans{p}{T}}$ denotes the projection onto $\crans{p}{T}$ along $\kers{p}{T}$ and $\mathbb{P}_{\kers{p}{T}}$ denotes the projection onto $\kers{p}{T}$ along $\crans{p}{T}$.
\end{enumerate}
\end{remark}

\begin{proposition}\label{prop2.7}
For $q\in \mathcal{I}_{D,B}$,  $\mathbb{P}_{\crans{q}{D}}:\crans{q}{BD}\rightarrow \crans{q}{D}$ is an isomorphism with  inverse $\mathbb{P}_{\crans{q}{BD}}:\crans{q}{D}\rightarrow \crans{q}{BD}$.
\end{proposition}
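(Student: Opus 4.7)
The plan is to exploit the fact that the two kernel/range decompositions
\[
L^{q}=\kers{q}{D}\oplus\crans{q}{D}\quad\text{and}\quad L^{q}=\kers{q}{BD}\oplus\crans{q}{BD}
\]
(guaranteed by Proposition \ref{proposition2.9}(\ref{Enu2prop2.9}) and Theorem \ref{theorem2.5}(\ref{Enu2thm2.5})) share a common kernel summand. More precisely, the first step is to establish the identity
\[
\kers{q}{BD}=\kers{q}{D}\quad\text{for }q\in\mathcal{I}_{D,B}.
\]
The inclusion $\supset$ is obvious since $\dom_{q}(BD)=\dom_{q}(D)$ and $B$ is bounded. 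For $\subset$, if $u\in\dom_{q}(D)$ satisfies $BDu=0$, then $Du\in\crans{q}{D}$ lies in the pointwise kernel of $B$; but by the extrapolated coercivity property recalled just before Theorem \ref{theorem2.5} (namely $\|Bv\|_{L^{q}}\gtrsim\|v\|_{L^{q}}$ for $v\in\crans{q}{D}$, valid on $\mathcal{I}_{2}\supset\mathcal{I}_{D,B}$), this forces $Du=0$.

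With this identity in hand, the proof becomes purely algebraic manipulation of two continuous projections sharing a common kernel. The second step is to verify that $\mathbb{P}_{\crans{q}{D}}$ maps $\crans{q}{BD}$ bijectively onto $\crans{q}{D}$. For injectivity, if $u\in\crans{q}{BD}$ and $\mathbb{P}_{\crans{q}{D}}u=0$, then $u\in\kers{q}{D}=\kers{q}{BD}$, whence $u\in\kers{q}{BD}\cap\crans{q}{BD}=\{0\}$. For surjectivity, given $v\in\crans{q}{D}$ decompose $v=w_{N}+w_{R}$ with $w_{N}\in\kers{q}{BD}=\kers{q}{D}$ and $w_{R}\in\crans{q}{BD}$; then applying $\mathbb{P}_{\crans{q}{D}}$ gives $v=\mathbb{P}_{\crans{q}{D}}w_{R}$, so $w_{R}$ is the desired preimage. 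Along the way this identifies the preimage as $\mathbb{P}_{\crans{q}{BD}}v$, which is the third step: checking
\[
\mathbb{P}_{\crans{q}{D}}\circ\mathbb{P}_{\crans{q}{BD}}=\id_{\crans{q}{D}},\qquad \mathbb{P}_{\crans{q}{BD}}\circ\mathbb{P}_{\crans{q}{D}}=\id_{\crans{q}{BD}}.
\]
Both identities reduce to the fact that adding or subtracting an element of the common kernel $\kers{q}{D}=\kers{q}{BD}$ does not change either projection, so the computations are immediate once the shared kernel is established. Boundedness of both maps comes for free from the boundedness of the ambient projections.

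The genuine content, and the only step that is not formal, is the identification $\kers{q}{BD}=\kers{q}{D}$ for $q\neq 2$. In the $L^{2}$ case this is built into Assumption \ref{Assumption2.4}(4), but for general $q\in\mathcal{I}_{D,B}$ one must invoke the extrapolation of the coercivity condition for $B$ on the range of $D$; this is precisely the information that defines the interval $\mathcal{I}_{2}$ in the discussion preceding Theorem \ref{theorem2.5}. Once this ingredient is cited, the remainder of the argument is a short chase through the direct sum decompositions and requires no further analysis.
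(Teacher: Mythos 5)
Your proof is correct and follows essentially the same route as the paper: both arguments hinge on the identity $\kers{q}{BD}=\kers{q}{D}$ and then chase the two kernel/range decompositions to show the projections compose to the identity. The only difference is cosmetic: the paper cites $\kers{q}{BD}=\kers{q}{D}$ from \cite[Proposition 2.1 (3)]{Aus-Sta-1}, whereas you unpack it from the extrapolated coercivity of $B$ on $\crans{q}{D}$, and the paper phrases the projection computation as a left/right-inverse check by symmetry rather than as injectivity plus surjectivity.
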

\begin{proof}
Let $h\in \crans{q}{BD}$. Then $h-\mathbb{P}_{\crans{q}{D}}h\in \kers{q}{BD}=\kers{q}{D}$ according to (\ref{EQ.2.0}) for $D$ and \cite[Proposition 2.1 (3)]{Aus-Sta-1}. Thus $\mathbb{P}_{\crans{q}{BD}}\big(h-\mathbb{P}_{\crans{q}{D}}h\big)=0$ and we see that $\mathbb{P}_{\crans{q}{BD}}:\crans{q}{D}\rightarrow \crans{q}{BD}$ is the left inverse of $\mathbb{P}_{\crans{q}{D}}:\crans{q}{BD}\rightarrow \crans{q}{D}$. Reversing the roles of $D$ and $BD$  shows that $\mathbb{P}_{\crans{q}{BD}}:\crans{q}{D}\rightarrow \crans{q}{BD}$ is the right inverse of $\mathbb{P}_{\crans{q}{D}}:\crans{q}{BD}\rightarrow \crans{q}{D}$.
\end{proof}

\begin{remark}[Similarity Property]\label{rem2.8}
For $q\in \mathcal{I}_{D,B}$ we know that $B:\crans{q}{D}\rightarrow \crans{q}{BD}$ is an isomorphism by \cite[Proposition 2.1 item (2)]{Aus-Sta-1}. In particular, for $f\in \Hinf{\nu}$ we have $f\left(DB\right)=B^{-1}f\left(BD\right)B$ on $\crans{q}{D}$ and $f\left(BD\right)=Bf\left(DB\right)B^{-1}$ on $\crans{q}{BD}$.
\end{remark}

\begin{remark}[The interval for the adjoint operators]\label{remark2.6}
Let $\mathcal{A}':=\left\{q/\left(q-1\right)|q\in \mathcal{A}\right\}$ for a subset $\mathcal{A}\subset\left(1,\infty\right)$. By Theorem \ref{theorem2.5} and \cite[Corollary 2.6]{Aus-Sta-1} we have 
\begin{align*}
\left(\mathcal{I}_{D,B}\right)'=\mathcal{I}_{D^{*},B^{*}}.
\end{align*}
\end{remark}

\begin{remark}[The interval for $B=Id$]
Assumption \ref{Assumption2.4} and Proposition \ref{proposition2.9} imply $\mathcal{I}_{D,Id}=\left(1,\infty\right)$ and $\mathcal{I}_{D,B}\subset \mathcal{I}_{D,Id}$. Thus, whenever we are allowed to use the conclusion of Theorem \ref{theorem2.5} for $T\in \left\{DB,BD\right\}$, we are also allowed to use the conclusion of Theorem \ref{theorem2.5} for $D$.
\end{remark}

Under Assumption \ref{Assumption2.4} we are allowed to use kernel/range decompositions and it will be helpful in the sequel to have some properties for the range and the null space. These observations were made in \cite{Hyt-McI}.

\begin{lemma}\cite[Section 3.3]{Hyt-McI}\label{lemma2.8}
For $p,q\in \mathcal{I}_{D,B}$ and $T\in\left\{BD,DB\right\}$ the following statements are true:
\begin{enumerate}
\item \label{Enu1lem2.8} We have with respect to $L^{p}$-topology the direct sum decomposition 
\begin{align*}
L^{p}\cap L^{q}=\left[\kers{p}{T}\cap \kers{q}{T}\right]\oplus\left[\crans{p}{T}\cap \crans{q}{T}\right]
\end{align*}
\item \label{Enu2lem2.8} $\kers{p}{T}\cap \kers{q}{T}$ is dense in $\kers{p}{T}$ with respect to $L^{p}$-topology.
\item \label{Enu3lem2.8} $\rans{p}{T}\cap \rans{q}{T}$ is dense in $\crans{p}{T}$ with respect to $L^{p}$-topology.
\item \label{Enu5lem2.8} $\kers{p}{T}\cap L^{q}\subset \kers{q}{T}$. Hence $\kers{p}{T}\cap \crans{q}{T}=\left\{0\right\}$.
\end{enumerate}
\end{lemma}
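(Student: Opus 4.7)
The four items all follow, in the order (4), (1), (2), (3), from combining Theorem \ref{theorem2.5} (which grants the kernel/range decomposition and the bounded holomorphic functional calculus in both $L^{p}$ and $L^{q}$) with the consistency statements collected in Remark \ref{rem2.6}. I expect the only genuinely analytic input to be a standard bisectorial limit isolated at the end.

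For (4), if $u\in\kers{p}{T}\cap L^{q}$ then $T_{p}u=0$ gives $(Id+sT_{p})^{-1}u=u$ for every $s>0$; since $\lambda\mapsto(1+s\lambda)^{-1}$ belongs to $\Hinfzer{\nu}$, Remark \ref{rem2.6}(4) transports this identity to $L^{q}$, forcing $u\in\doms{q}{T_{q}}$ with $T_{q}u=0$. The vanishing $\kers{p}{T}\cap\crans{q}{T}=\{0\}$ follows at once from the $L^{q}$--kernel/range decomposition. For (1), given $u\in L^{p}\cap L^{q}$ I set $u_{N}:=\mathbb{P}_{\kers{p}{T}}u$ and $u_{R}:=\mathbb{P}_{\crans{p}{T}}u$; by Remark \ref{rem2.6}(5) each projection agrees with its $L^{q}$--counterpart on $L^{p}\cap L^{q}$, so $u_{N}\in\kers{p}{T}\cap\kers{q}{T}$ and $u_{R}\in\crans{p}{T}\cap\crans{q}{T}$, while the topological character of the sum in the $L^{p}$--norm is inherited from the $L^{p}$--decomposition.

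For (2), I would pick the truncation $v_{k}:=u\,\chi_{\{|u|\le k,\,|x|\le k\}}\in L^{p}\cap L^{q}$, which converges to $u$ in $L^{p}$ by dominated convergence, and set $u_{k}:=\mathbb{P}_{\kers{p}{T}}v_{k}$; boundedness of the projection gives $u_{k}\to u$ in $L^{p}$, while Remark \ref{rem2.6}(5) places $u_{k}=\mathbb{P}_{\kers{q}{T}}v_{k}$ in $\kers{p}{T}\cap\kers{q}{T}$. The parallel argument with $\mathbb{P}_{\crans{p}{T}}$ yields, as a byproduct, density of $\crans{p}{T}\cap\crans{q}{T}$ in $\crans{p}{T}$, which reduces (3) to approximating an arbitrary $u\in\crans{p}{T}\cap\crans{q}{T}$ by elements of $\rans{p}{T}\cap\rans{q}{T}$ in the $L^{p}$--norm. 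For this I take
\begin{align*}
w_{s}\ :=\ sT(Id+sT)^{-1}u\ =\ u-(Id+sT)^{-1}u,\qquad s>0,
\end{align*}
observing that $w_{s}=T\bigl[s(Id+sT)^{-1}u\bigr]$ lies in $\rans{p}{T}\cap\rans{q}{T}$ because $s(Id+sT_{p})^{-1}u=s(Id+sT_{q})^{-1}u$ belongs to $\doms{p}{T}\cap\doms{q}{T}$ by Remark \ref{rem2.6}(3)--(4); a diagonal extraction then concludes.

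The only genuinely analytic point is the limit $(Id+sT)^{-1}u\to 0$ in $L^{p}$ as $s\to\infty$ for $u\in\crans{p}{T}$, needed to get $w_{s}\to u$. I would verify it first on the dense subset $\rans{p}{T}\subset\crans{p}{T}$ via the identity $(Id+sT)^{-1}Tv=s^{-1}\bigl(v-(Id+sT)^{-1}v\bigr)$, whose right-hand side is $O(s^{-1})$ by the bisectorial bound, and then extend to $\crans{p}{T}$ using uniform boundedness of $(Id+sT)^{-1}$. Everything else in the proof is bookkeeping against Remark \ref{rem2.6}.
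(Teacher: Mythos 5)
The paper does not actually reproduce a proof of this lemma---it cites \cite[Section~3.3]{Hyt-McI} directly---so there is nothing internal to compare against and I judge your argument on its own terms. The overall strategy of deriving all four items from Theorem~\ref{theorem2.5} together with the consistency statements of Remark~\ref{rem2.6} is sound: items~(1) and (2), and the reduction of (3) to approximating an element of $\crans{p}{T}\cap\crans{q}{T}$ by $\rans{p}{T}\cap\rans{q}{T}$, go through exactly as you describe.

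However, there is a genuine flaw that touches both~(4) and the second half of~(3): you invoke the resolvent $\left(Id+sT\right)^{-1}$ for \emph{real} $s>0$ and assert that $\lambda\mapsto\left(1+s\lambda\right)^{-1}$ lies in $\Hinfzer{\nu}$. Neither claim is correct when $T$ is merely bisectorial. The pole $\lambda=-1/s$ is a negative real number, hence lies \emph{inside} the open bisector $\opbisec{\nu}$, so the function is not holomorphic there; correspondingly, since $\sigma\left(T\right)\subset\bisec{\omega}$ may fill the whole real axis (already for $D=-i\tfrac{d}{dx}$), the point $-1/s$ need not belong to the resolvent set and $\left(Id+sT\right)^{-1}$ need not exist. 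The fix is the one the paper itself uses systematically: parametrize by $\lambda\in\mathbb{C}\setminus\bisec{\nu}$, e.g.\ $\lambda=is$ with $s$ real nonzero. With $\left(Id+isT\right)^{-1}$ in place of $\left(Id+sT\right)^{-1}$, and $isT\left(Id+isT\right)^{-1}u$ as the approximant in~(3), the function $\lambda\mapsto\left(1+is\lambda\right)^{-1}$ has its pole at $i/s\notin\bisec{\nu}$, so it does lie in $\Hinfzer{\nu}$, and the rest of your reasoning (including the $O\left(s^{-1}\right)$ decay of $\left(Id+isT\right)^{-1}Tv$) carries over verbatim. You can also bypass resolvents entirely in~(4): if $u\in\kers{p}{T}\cap L^{q}$ then $\mathbb{P}_{\kers{p}{T}}u=u$, and Remark~\ref{rem2.6}(5) gives $\mathbb{P}_{\kers{q}{T}}u=u$, i.e.\ $u\in\kers{q}{T}$.
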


\section{The $L^{p}-L^{q}$ Theory}
\subsection{$L^{p}-L^{q}$ estimates in terms of decay properties of holomorphic functions}
There are connections of the bounded holomorphic functional calculus on $L^{q}$ and $L^{q}-L^{q}$ off-diagonal estimates as described in the next lemma, which comes essentially from \cite[Lemma 3.6]{Aus-McI-Rus}. Compare also \cite[Lemma 7.3]{Hyt-Nee-Por} and \cite[Lemma 2.28]{Hof-May-McI}. Before, let us define for $f\in \Hinfzer{\nu}$ the function $f_{t}\in \Hinfzer{\nu}$ by $f_{t}\left(\lambda\right):=f\left(t\lambda\right)$, $\lambda\in\opbisec{\nu}\cup\left\{0\right\}$.
So, one can define families of bounded operators $\left\{f_{t}\left(T\right)\right\}_{t>0}$ via the family of functions $\left\{f_{t}\right\}_{t>0}$. 
\begin{proposition}[$L^{q}-L^{q}$ off-diagonal estimates]\label{proposition3.1}
Let $T\in\left\{DB,BD\right\}$. Denote by $\omega=\omega_{DB}=\omega_{BD}$ the type of bisectoriality and let $\nu\in\left(\omega,\frac{\pi}{2}\right)$. Let $\sigma>0$ and $\tau>0$ be positive real numbers and $q\in \mathcal{I}_{D,B}=\left(p_{-}\left(D,B\right),p_{+}\left(D,B\right)\right)$. Suppose that $\psi\in\Psidecpar{\sigma}{\tau}{\nu}$ and $g\in \Hinf{\nu}$. Then the family of operators  $\left\{g\left(T\right)\psi_{t}\left(T\right)\right\}_{t>0}$ satisfies $L^{q}-L^{q}$ off-diagonal estimates of order $\sigma$.
\end{proposition}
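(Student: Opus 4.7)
The starting identity is $g(T)\psi_{t}(T)=(g\psi_{t})(T)$. Since $\psi\in\Psidec{\nu}$ we have $\psi_{t}(0)=0$, so $\psi_{t}(T)$ annihilates $\kers{q}{T}$ and maps $\crans{q}{T}$ into itself; the bounded $H^{\infty}$-calculus on $\crans{q}{T}$ provided by Theorem \ref{theorem2.5}(\ref{Enu4thm2.5}) is multiplicative and yields the identity on the range part, while on the null-space part both sides vanish. The product $g\psi_{t}$ belongs to $\Psidecpar{\sigma}{\tau}{\nu}$ with
$$|g(\lambda)\psi_{t}(\lambda)|\lesssim\|g\|_{\Hinf{\nu}}\frac{|t\lambda|^{\sigma}}{1+|t\lambda|^{\sigma+\tau}},$$
and hence admits the Dunford representation (\ref{EQ.2.3}). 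A change of variable $\mu=t\lambda$ gives
$$g(T)\psi_{t}(T)=\frac{1}{2\pi i}\int_{\partial\bisec{\theta}}g(\mu/t)\,\psi(\mu)\,(Id+sT)^{-1}\,\frac{d\mu}{\mu},\qquad s=-t/\mu,$$
for some $\theta\in(\omega,\nu)$, with $s\notin\bisec{\nu}$ and $|s|=t/|\mu|$.

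The main ingredient is the off-diagonal estimate for the resolvent of arbitrary order furnished by Theorem \ref{theorem2.5}(\ref{Enu5thm2.5}): for every $K\geq 0$,
$$\|\chi_{E}(Id+sT)^{-1}(\chi_{F}u)\|_{L^{q}}\lesssim_{K}\bigl(1+d(E,F)/|s|\bigr)^{-K}\|\chi_{F}u\|_{L^{q}}.$$
Fix $K>\sigma$, apply Minkowski's inequality to the Dunford integral, combine with the pointwise bound above, and parametrize $\partial\bisec{\theta}$ by $r=|\mu|$. The claim reduces to the scalar inequality
$$I(d,t):=\int_{0}^{\infty}\frac{r^{\sigma-1}}{1+r^{\sigma+\tau}}\left(1+\frac{dr}{t}\right)^{-K}dr\lesssim\left(1+\frac{d}{t}\right)^{-\sigma},\qquad d=d(E,F).$$

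To finish, split on whether $d\leq t$ or $d>t$. In the first case, bound $(1+dr/t)^{-K}\leq 1$ and integrate; convergence at $0$ comes from $\sigma>0$ and at $\infty$ from $\tau>0$, while $(1+d/t)^{-\sigma}\asymp 1$. In the second case, substitute $r=ts/d$ to extract a factor $(t/d)^{\sigma}$ and estimate the remaining integral by $\int_{0}^{\infty}s^{\sigma-1}(1+s)^{-K}ds<\infty$, which is finite precisely because $K>\sigma$; then $(t/d)^{\sigma}\asymp(1+d/t)^{-\sigma}$.

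\textbf{Expected main obstacle.} Nothing is deep after the reduction, but two points deserve care: first, justifying $g(T)\psi_{t}(T)=(g\psi_{t})(T)$ when $g\in\Hinf{\nu}$ has no assigned value at $0$ (this is possible exactly because $\psi_{t}(T)$ lands in the closure of the range, where the $H^{\infty}$-calculus is unambiguous and multiplicative); second, verifying that the Dunford integral defining $(g\psi_{t})(T)$ can be estimated in the stronger norm $\|\chi_{E}(\cdot)\chi_{F}\|_{L^{q}\to L^{q}}$ by bringing the norm inside the integral, which is legitimate because the integrand is normally convergent thanks to the $\Psidecpar{\sigma}{\tau}{\nu}$-decay of $g\psi_{t}$ combined with the uniform resolvent bound (\ref{EQ.2.1}).
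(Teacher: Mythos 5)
Your proof is correct and is essentially the same argument as the one the paper attributes to \cite[Lemma 3.6]{Aus-McI-Rus}, \cite[Lemma 7.3]{Hyt-Nee-Por}, \cite[Lemma 2.28]{Hof-May-McI} (the paper itself does not write out the details, relying on those citations). The Dunford representation of $(g\psi_t)(T)$, the use of Theorem \ref{theorem2.5}(\ref{Enu5thm2.5}) (resolvent off-diagonal estimates of arbitrary order $K>\sigma$), Minkowski's inequality, and the reduction to the scalar integral $I(d,t)$ with the two-case analysis ($d\le t$, $d>t$) is precisely the standard route, and all the steps check out — including the two cautionary points you flag about multiplicativity through $\crans{q}{T}$ and normal convergence.
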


This is interesting in view of the following example.

\begin{example}[Off-diagonal estimates and the semigroup]\label{example3.2}
Here, let us denote  $\sgn z:= \sgn(\Re z)$,  $\widetilde z :=\sgn\left(z\right)z$, $z\in \opbisec{\nu}$ and $|T|:=\sgn\left(T\right)T$.
\begin{enumerate}
\item Let $T\in\left\{DB,BD\right\}$. Denote by $\omega=\omega_{DB}=\omega_{BD}$ the type of bisectoriality and let $\nu\in\left(\omega,\frac{\pi}{2}\right)$. Then the decomposition $e^{-t|T|}=\big(e^{-t|T|}-(Id+itT)^{-1}\big)+(Id+itT)^{-1}$ shows that the semigroup $\left\{e^{-t|T|}\right\}_{t>0}$ satisfies $L^{q}-L^{q}$ off-diagonal estimates of any order $K\in \left[0,1\right]$ as the function $\psi\left(z\right):=e^{-|z|}-\left(1+iz\right)^{-1},\ z\in \opbisec{\nu}$, satisfies $\psi\in \Psidecpar{1}{1}{\nu}$. 
\item The example $n=N=1$ and $D=-i\frac{d}{dx}$ shows that we can not gain more in general. The kernel of the semigroup $\left\{e^{-t|-i\frac{d}{dx}|}\right\}$ is the Poisson kernel $p_{t}\left(x\right):=\frac{1}{\pi} \frac{t}{|x|^{2}+t^{2}}$.
Thus the semigroup does not satisfy $L^{2}-L^{2}$ off-diagonal estimates of order $K>1$ in general.
\end{enumerate}
\end{example}

For certain operators in the functional calculus of $DB$ (or $BD$ resp.) we obtain even $L^{p}-L^{q}$ off-diagonal estimates and $L^{p}-L^{q}$ boundedness. More precisely we have

\begin{proposition}[$L^{p}-L^{q}$ off-diagonal estimates]\label{proposition3.3}
Let $T\in\left\{DB,BD\right\}$. Denote by $\omega=\omega_{DB}=\omega_{BD}$ the type of bisectoriality and let $\nu\in\left(\omega,\frac{\pi}{2}\right)$.
\newline
Suppose $p,q\in \mathcal{I}_{D,B}$ such that $p<q$ and $\tau>\frac{n}{p}-\frac{n}{q}$. Then there exists $c:=c_{p,q}>0$ such that for all $0\leq K<\frac{\sigma}{c}$ one has: For all $\psi\in\Psidecpar{\sigma}{\tau}{\nu}$ and all $g\in \Hinf{\nu}$ the family $\left\{g\left(T\right)\psi_{t}\left(T\right)\right\}_{t>0}$ satisfies $L^{p}-L^{q}$ off-diagonal estimates of order $K$. Moreover, one can choose 
\begin{align}
c_{p,q}=\left(1-\left(\frac{1}{p}-\frac{1}{q}\right)\left(\frac{1}{p_{-}\left(D,B\right)}-\frac{1}{p_{+}\left(D,B\right)}\right)^{-1}\right)^{-1}.
\end{align}
\end{proposition}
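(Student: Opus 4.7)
The plan is to combine two ingredients via the Riesz--Thorin interpolation theorem. The first is the $L^{r}$-$L^{r}$ off-diagonal estimate of order $\sigma$ for $\{g(T)\psi_t(T)\}_{t>0}$, available at every $r\in\mathcal{I}_{D,B}$ by Proposition \ref{proposition3.1}. The second, which is the genuinely new input, is an $L^{\tilde p}$-$L^{\tilde q}$ boundedness with the correct scaling $t^{n/\tilde q-n/\tilde p}$ but \emph{no} spatial decay, valid for a pair $\tilde p<\tilde q$ in $\mathcal{I}_{D,B}$ satisfying $\tau>n(1/\tilde p-1/\tilde q)$.

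To establish the second ingredient I would factor the symbol as $\psi=\tilde\psi\cdot\phi$, where $\phi(\lambda):=\lambda^{m}(1+\lambda^{2})^{-(m+\tau)/2}$ plays the role of a Bessel-potential-type symbol of order $-\tau$ (with $1\le m\le\sigma$) and $\tilde\psi:=\psi/\phi$ lies in $\Psidec{\nu}\cup\Hinf{\nu}$ thanks to the decay properties of $\psi$. Writing $g(T)\psi_t(T)=\bigl(g(T)\tilde\psi_t(T)\bigr)\phi_t(T)$, the first factor is uniformly bounded on $L^{\tilde q}$ by the bounded $H^{\infty}$-calculus (Theorem \ref{theorem2.5}(\ref{Enu4thm2.5})), so it suffices to prove $\|\phi_t(T)u\|_{L^{\tilde q}}\lesssim t^{n/\tilde q-n/\tilde p}\|u\|_{L^{\tilde p}}$. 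For $T=D$ this is a classical Bessel-potential estimate, obtained from the coercivity $\|\nabla u\|_{L^{r}}\lesssim\|Du\|_{L^{r}}$ on $\crans{r}{D}\cap\doms{r}{D}$ (Proposition \ref{proposition2.9}(\ref{Enu4prop2.9})) together with the Hardy--Littlewood--Sobolev inequality; on $\kers{\tilde p}{D}$ the operator vanishes since $\phi(0)=0$. For $T\in\{DB,BD\}$ the bound is transferred via the similarity property (Remark \ref{rem2.8}), the isomorphism between the closed ranges (Proposition \ref{prop2.7}), the boundedness of $B$ and its restricted inverse, and the compatibility of the kernel/range decomposition across exponents (Lemma \ref{lemma2.8}).

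With both ingredients in hand I would apply Riesz--Thorin to the operator $\chi_{E}\,g(T)\psi_t(T)\,\chi_{F}$ between the endpoints $L^{s_{0}}\to L^{s_{0}}$ of norm $\lesssim(1+d(E,F)/t)^{-\sigma}$ and $L^{\tilde p}\to L^{\tilde q}$ of norm $\lesssim t^{n/\tilde q-n/\tilde p}$, with $s_{0}\in\mathcal{I}_{D,B}$ and interpolation parameter $\theta:=(1/p-1/q)(1/\tilde p-1/\tilde q)^{-1}\in(0,1)$ chosen so that $1/p=(1-\theta)/s_{0}+\theta/\tilde p$ (and automatically $1/q=(1-\theta)/s_{0}+\theta/\tilde q$). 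This yields an $L^{p}$-$L^{q}$ off-diagonal estimate with scaling $t^{n/q-n/p}$ and order $K=(1-\theta)\sigma$. Optimizing over $(\tilde p,\tilde q)$ by letting $\tilde p\to p_{-}(D,B)^{+}$ and $\tilde q\to p_{+}(D,B)^{-}$ drives $\theta$ up to $(1/p-1/q)(1/p_{-}-1/p_{+})^{-1}$, so every $K<\sigma/c_{p,q}$ is attainable with the displayed formula for $c_{p,q}$.

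The main technical obstacle is the Sobolev-type bound for $\phi_{t}(T)$ when $T\in\{DB,BD\}$: transferring the Bessel-potential integrability gain from $D$ to the perturbed operators requires handling the distinct kernel/range decompositions of $T$ and $D$, exploiting that $B$ and $B^{-1}$ are only bounded on the respective closed ranges, and preserving the uniform dimensional scaling $t^{n/\tilde q-n/\tilde p}$ across the similarity transform.
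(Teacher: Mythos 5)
Your overall strategy---interpolating an $L^{r}$-$L^{r}$ off-diagonal estimate of order $\sigma$ with a plain $L^{\tilde p}$-$L^{\tilde q}$ bound that has the right dimensional scaling---is exactly the paper's strategy. But there is a genuine gap in the interpolation step, and the way you propose to get the $L^{\tilde p}$-$L^{\tilde q}$ bound is also not fully justified.

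The gap in the interpolation is the constraint on $\tau$. You correctly note that your $L^{\tilde p}$-$L^{\tilde q}$ boundedness requires $\tau>n\left(1/\tilde p-1/\tilde q\right)$, but you then ``optimize'' by letting $\tilde p\to p_{-}(D,B)$ and $\tilde q\to p_{+}(D,B)$, which makes the required condition $\tau>n\left(1/p_{-}-1/p_{+}\right)$. This is strictly stronger than the hypothesis $\tau>n/p-n/q$ (since $p_{-}<p<q<p_{+}$). For $\tau$ close to $n/p-n/q$, the admissible $(\tilde p,\tilde q)$ are forced to stay close to $(p,q)$, so $\theta\to 1$ and the resulting order $K=(1-\theta)\sigma$ degenerates to $0$. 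Riesz--Thorin alone therefore cannot give the stated range $K<\sigma/c_{p,q}$. The paper avoids this by using Stein's interpolation for the \emph{analytic family} $\zeta^{\alpha}_{t}(z):=g(z)(1+t\widetilde z)^{\alpha}\psi_{t}(z)$: for $\Re\alpha<0$ the factor $(1+t\widetilde z)^{\alpha}$ supplies extra decay at $\infty$, so the $L^{p_0}$-$L^{q_0}$ endpoint bound holds for $p_0,q_0$ arbitrarily close to $p_{\pm}(D,B)$ with no extra restriction on $\tau$; on the other boundary line the $L^{r}$-$L^{r}$ off-diagonal estimate of order $\sigma$ still holds because $\zeta^{\alpha}_{t}$ remains uniformly in $\Hinf{\nu}$. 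Interpolating back to $\alpha=0$ recovers $g(T)\psi_{t}(T)$, and one can check that the compatibility of the two boundary conditions on the strip boils down exactly to $\tau>n/p-n/q$. This ``decay for off-diagonal order'' trade is the piece your proposal is missing.

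On the $L^{\tilde p}$-$L^{\tilde q}$ endpoint itself: your factorization $\psi=\widetilde\psi\,\phi$ with a single Bessel-potential-type factor $\phi$ is different from the paper's argument, which instead iterates a one-step Sobolev gain: Claim \ref{claim3.5} shows the resolvent $(Id+\lambda DB)^{-1}$ maps $\crans{p}{D}$ to $\crans{q}{D}$ for one Sobolev step (via $\proj{\crans{p}{D}}(Id+iBD)^{-1}$ mapping into $\crans{p}{D}\cap\doms{p}{D}\subset W^{1,p}$, then Sobolev embedding and Proposition \ref{prop2.7}), Claim \ref{claim3.6} converts this to $\psi_{t}(DB)$ via the Dunford integral, and the iteration passes from $q$ down to $p$ through the chain $q_{l}=(q_{l-1})_{*}$. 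Your ``transfer via similarity'' for $\phi_{t}(T)$ does not obviously work: Remark \ref{rem2.8} relates $DB$ to $BD$, not either of them to $D$, and there is no similarity intertwining $\phi_{t}(DB)$ with $\phi_{t}(D)$. Any transfer has to go through the projection $\proj{\crans{q}{D}}$ on the level of the resolvent, which is exactly what Claim \ref{claim3.5} does; it is not clear how to do this directly for a fractional-order $\phi_{t}(T)$. So even modulo the interpolation gap, the endpoint $L^{\tilde p}$-$L^{\tilde q}$ bound needs a more careful argument along the lines of the paper's iteration.
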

\begin{proof}
First we prove the following claim for $L^{p}-L^{q}$-boundedness, which is a special case of the lemma taking $E=F=\mathbb{R}^{n}$ and $K=0$.
\begin{claim}\label{claim3.4}
Suppose $p,q\in \mathcal{I}_{D,B}$ such that $p<q$. Let  $\psi\in\Psidecpar{\sigma}{\tau}{\nu}$, where $\sigma>0$ and $\tau>\frac{n}{p}-\frac{n}{q}$, and $g\in \Hinf{\nu}$. Then the family $\left\{g\left(T\right)\psi_{t}\left(T\right)\right\}_{t>0}$ is $L^{p}-L^{q}$-bounded.
\end{claim}
The proof of Claim \ref{claim3.4} is organized in several steps. The first step is
\begin{claim}\label{claim3.5}
Suppose $q\in \mathcal{I}_{D,B}$ and $p\in \left[q_{*},q\right]\cap \mathcal{I}_{D,B}$, where the lower Sobolev exponent is defined by $q_{*}=\frac{qn}{q+n}$.
Then for all $\lambda\in \mathbb{C}\backslash\bisec{\nu}$, the operator $\left(Id+\lambda DB\right)^{-1}$ is bounded  from $\crans{p}{D}$ to $\crans{q}{D}$ with
\begin{align*}
||\left(Id+\lambda DB\right)^{-1}u||_{L^{q}}\lesssim |\lambda|^{\frac{n}{q}-\frac{n}{p}}||u||_{L^{p}}.
\end{align*}
\end{claim}
\begin{proof}[Proof of Claim \ref{claim3.5}]
We first consider estimates for the resolvent of $BD$ and use the similarity property to pass to $DB$ later on. As 
$$\left(Id+iBD\right)^{-1}:\crans{p}{BD}\rightarrow \crans{p}{BD}$$ and 
$$\left(Id+iBD\right)^{-1}:\crans{p}{BD}\rightarrow \doms{p}{BD}$$ we deduce 
$$\proj{\crans{p}{D}}\left(Id+iBD\right)^{-1}:\crans{p}{D}\rightarrow \crans{p}{D}\cap\doms{p}{D}\subset W^{1,p}.$$
Thus, by Sobolev embedding theorem, we obtain $\proj{\crans{p}{D}}\left(Id+iBD\right)^{-1}u\in L^{q}$ for all $u\in \crans{p}{BD}$ with
$$||\proj{\crans{p}{D}}\left(Id+iBD\right)^{-1}||_{L^{q}}\leq C||u||_{L^{p}}.$$
Now if, moreover, $u\in \crans{q}{BD}$ then $\left(Id+iBD\right)^{-1}u\in \crans{q}{BD}$. By the variant of Remark \ref{rem2.6} for $D$ we have $\proj{\crans{p}{D}}=\proj{\crans{q}{D}}$ on $L^{p}\cap L^{q}$. From that we deduce 
$$\proj{\crans{p}{D}}\left(Id+iBD\right)^{-1}u=\proj{\crans{q}{D}}\left(Id+iBD\right)^{-1}u\in \crans{p}{D}\cap\crans{q}{D}$$
for all $u\in \crans{p}{BD}\cap\crans{q}{BD}$. Since $\proj{\crans{q}{D}}:\crans{q}{BD}\rightarrow\crans{q}{D}$ is an isomorphism by Proposition \ref{prop2.7}, we get 
$$||\left(Id+iBD\right)^{-1}u||_{L^{q}}\lesssim ||\proj{\crans{q}{D}}\left(Id+iBD\right)^{-1}u||_{L^{q}}\lesssim ||u||_{L^{p}}$$
for all $u\in \crans{p}{BD}\cap\crans{q}{BD}$.
By Remark \ref{rem2.8} we know that $B:\crans{p}{D}\rightarrow \crans{p}{BD}$ and $B:\crans{q}{D}\rightarrow \crans{q}{BD}$ are isomorphisms. Thus the similarity property in Remark \ref{rem2.8} yields 
$$||\left(Id+iDB\right)^{-1}u||_{L^{q}}\lesssim ||u||_{L^{p}}$$
for all $u\in \crans{p}{BD}\cap\crans{q}{BD}$.
Now, we use a rescaling argument and note that for $\lambda\in \mathbb{C}\backslash \overline{S}_{\nu}$, $B_{\lambda}$ defined by multiplication of $B_{\lambda}\left(x\right):=-ie^{i\arg \lambda} B\left(|\lambda| x\right)$ has the same properties as $B$ with uniform bounds in $\arg\lambda$. Let $u_{\lambda}\left(x\right):=u\left(|\lambda| x\right)$. Then we have as above 
\begin{align*}
||\left(Id+iDB_{\lambda}\right)^{-1}u_{\lambda}||_{L^{q}}
&\lesssim ||u_{\lambda}||_{L^{p}}
\end{align*} 
and substitution $|\lambda| x\mapsto x$ yields the estimate
\begin{align*}
||\left(Id+\lambda DB\right)^{-1}u||_{L^{q}}
&\lesssim |\lambda|^{\frac{n}{q}-\frac{n}{p}}||u||_{L^{p}}.
\end{align*}
for all $u\in \crans{p}{D}\cap\crans{q}{D}$. By density, the operator $\left(Id+\lambda DB\right)^{-1}$ has the desired extension to $\crans{p}{D}$.
\end{proof}
The second step is
\begin{claim}\label{claim3.6}
Suppose $q\in \mathcal{I}_{D,B}$ and $p\in \left[q_{*},q\right]\cap \mathcal{I}_{D,B}$. Let  $\psi\in\Psidecpar{\sigma}{\tau}{\nu}$, where $\sigma>0$ and $\tau>\frac{n}{p}-\frac{n}{q}$, and $g\in \Hinf{\nu}$. Then we have
\begin{align*}
||g\left(DB\right)\psi_{t}\left(DB\right)u||_{L^{q}}
\lesssim t^{\frac{n}{q}-\frac{n}{p}}||u||_{L^{p}}
\end{align*}
for all $t>0$ and all $u\in L^{p}\cap L^{q}$ (By density even for all $u\in L^{p}$).
\end{claim}
\begin{proof}[Proof of Claim \ref{claim3.6}]
If $u\in \crans{p}{D}\cap \crans{q}{D}$ we have for each $\theta\in \left(\omega,\nu\right)$
\begin{align}
||g\left(DB\right)\psi_{t}\left(DB\right)u||_{L^{q}}
&\lesssim \int_{\partial \bisec{\theta}} |g\left(\lambda\right)||\psi\left(t\lambda\right)|||\left(Id-\lambda^{-1}DB\right)^{-1}u||_{L^{q}}\left|\frac{d\lambda}{\lambda}\right|\nonumber\\
&\lesssim t^{\frac{n}{q}-\frac{n}{p}} \int_{\partial \bisec{\theta}} |\psi\left(t\lambda\right)||t\lambda|^{\frac{n}{p}-\frac{n}{q}}\left|\frac{d\lambda}{\lambda}\right| ||u||_{L^{p}} \nonumber\\
&\lesssim t^{\frac{n}{q}-\frac{n}{p}} ||u||_{L^{p}} \label{EQ.3.2}
\end{align}
by Claim \ref{claim3.5}, Definition \ref{EQ.2.3} and the decay properties of $\psi$. For $u\in L^{p}\cap L^{q}$ we can use the the decomposition in Lemma \ref{lemma2.8}, (\ref{Enu1lem2.8}), associated to the operator $DB$ and $\psi_{t}\left(DB\right)\widetilde{u}=0$ for all $\widetilde{u}\in \kers{p}{DB}\cap \kers{q}{DB}$.
\end{proof}
The third step is the proof of Claim \ref{claim3.4} in the case $T=DB$. 
\newline
Let us denote $q_{0}:=q$ and $q_{l}:=\left(q_{l-1}\right)_{*}$ for $l\in \mathbb{N}^{*}$ and $k:=\inf\left\{l\in \mathbb{N}^{*}: q_{l}\leq p\right\}$. Further, we set
\begin{align*}
\delta&:=\frac{1}{k+1}\left(\tau -\left(\frac{n}{p}-\frac{n}{q}\right)\right)\\
m_{l}&:=1+\delta =\frac{n}{q_{l}}-\frac{n}{q_{l-1}}+\delta &\text{for } 1\leq l <k \text{ and }\\ m_{k}&:=\frac{n}{p}-\frac{n}{q_{k-1}}+\delta=\frac{n}{p}-\frac{n}{q_{k-1}}+\delta. 
\end{align*}
Then we factorize 
\begin{align*}
\psi\left(z\right)= \left(\prod_{l=1}^{k} \left(\frac{1+\widetilde{z}}{1+\widetilde{z}}\right)^{m_{l}}\right)\cdot\left(\frac{1+\widetilde{z}}{\widetilde{z}}\right)^{\frac{k\sigma}{k+1}}\cdot\left(\frac{\widetilde{z}}{1+\widetilde{z}}\right)^{\frac{k\sigma}{k+1}}\cdot\psi\left(z\right)=:\zeta\left(z\right)\prod_{l=1}^{k} \xi^{l}\left(z\right)
\end{align*}
where $\widetilde{z}:=\sgn\left(z\right)z$ and 
\begin{align*} \zeta\left(z\right)&:=\left(\prod_{l=1}^{k}\left(1+\widetilde{z}\right)^{m_{l}}\right)\left(\frac{1+\widetilde{z}}{\widetilde{z}}\right)^{\frac{k\sigma}{k+1}}\psi\left(z\right), \\ \xi^{l}\left(z\right)&:=\left(\frac{\widetilde{z}}{1+\widetilde{z}}\right)^{\frac{\sigma}{k+1}}\left(\frac{1}{1+\widetilde{z}}\right)^{m_{l}}.
\end{align*} 
We observe that each $\xi^{l}$ satisfies the conditions of Claim \ref{claim3.6}:
\newline
$\xi^{l}\in \Psidecpar{\sigma_{l}}{\tau_{l}}{\nu}$ where $\sigma_{l}>0$, $\tau_{l}>\frac{n}{q_{l}}-\frac{n}{q_{l-1}}$, $\sigma_{k}>0$, $\tau_{k}>\frac{n}{p}-\frac{n}{q_{k-1}}$ and $\zeta\in \Psidec{\nu}$.
\newline
Hence, we have 
\begin{align}
&\xi^{l}_{t}\left(DB\right): L^{q_{l-1}}\rightarrow L^{q_{l}},\\
&\xi^{k}_{t}\left(DB\right): L^{q_{k-1}}\rightarrow L^{p},\\
&\zeta_{t}\left(DB\right): L^{p}\rightarrow L^{p}.
\end{align}
Now, Claim \ref{claim3.4} in the case $T=DB$ follows by iteration of Claim \ref{claim3.6}.

\

The fourth step is to deduce Claim \ref{claim3.4} in the case $T=BD$.
From the case $T=DB$ just proved, the similarity property $g\left(BD\right)\psi_{t}\left(BD\right)=Bg\left(DB\right)\psi_{t}\left(DB\right)B^{-1}$ on $\crans{q}{BD}$ and $\crans{p}{BD}$, the boundedness and coercivity of $B$ on $\crans{q}{D}$ and $\crans{p}{D}$ we get 
\begin{align*}
||g\left(BD\right)\psi_{t}\left(BD\right)u||_{L^{q}}
\lesssim t^{\frac{n}{q}-\frac{n}{p}}||B^{-1}u||_{L^{p}}
\lesssim t^{\frac{n}{q}-\frac{n}{p}}||u||_{L^{p}}
\end{align*}
for all $u\in \crans{q}{BD}\cap\crans{p}{BD}$.
In the general case $u\in L^{q}\cap L^{p}$ we can use the decomposition in Lemma \ref{lemma2.8}, (\ref{Enu1lem2.8}) associated to the operator $BD$ and $\psi_{t}\left(BD\right)\widetilde{u}=0$ for all $\widetilde{u}\in \kers{p}{BD}\cap \kers{q}{BD}$. By density we conclude the assertion 
\begin{align*}
||g\left(BD\right)\psi_{t}\left(BD\right)u||_{L^{q}}
&\lesssim t^{\frac{n}{q}-\frac{n}{p}}||u||_{L^{p}}
\end{align*}
for all $u\in L^{p}$. So, Claim \ref{claim3.4} is completely proved.

\

Now, we turn to the conclusion of Proposition \ref{proposition3.3} using Claim \ref{claim3.4}. 
First 
by normalizing,  we may  assume $||\psi||_{\Hinf{\nu}}=||g||_{\Hinf{\nu}}=1$. We combine $L^{r}-L^{r}$ off-diagonal estimates and $L^{p_{0}}-L^{q_{0}}$ boundedness to conclude $L^{p}-L^{q}$ off-diagonal estimates by interpolation, where $p,q,r,p_{0},q_{0}\in \mathcal{I}_{D,B}$. Since we use $L^{p_{0}}-L^{q_{0}}$ boundedness we have to make sure that the family of holomorphic functions has enough decay at infinity to use $L^{p_{0}}-L^{q_{0}}$ boundedness. So, we define $\zeta^{\alpha}_{t}\left(z\right):=g\left(z\right)\left(1+t\widetilde{z}\right)^{\alpha}\psi_{t}\left(z\right)$, where we recall $\widetilde{z}=\sgn\left(\Re z\right)z$ for $z\in \opbisec{\nu}$ and $\alpha\in \mathbb{C}$ such that $\Re \alpha<\tau$ and observe that for fixed $t>0$ the operator $g\left(T\right)\psi_{t}\left(T\right)$ is embedded in the analytic family $\left\{\zeta^{\alpha}_{t}\left(T\right)\right\}_{\alpha}$. Polar coordinates and $\arg\left(1+t\widetilde{z}\right)\in \left(-\nu,\nu\right)$ yields
\begin{align*}
\sup_{z\in \opbisec{\nu}}\left|\left(1+t\widetilde{z}\right)^{\alpha}\right|\leq e^{\nu |\Im \alpha|}|tz|^{\Re \alpha}.
\end{align*}
Using polar coordinates we can calculate that 
$$|\zeta^{\alpha}_{t}\left(z\right)|\lesssim e^{\nu |\Im \alpha|}\inf\left\{|tz|^{\sigma},|tz|^{\Re \alpha-\tau}\right\}$$ and consequently the symbol satisfies $||\zeta^{\alpha}_{t}||_{\Hinf{\nu}}\lesssim 1$. Thus we can deduce from Proposition \ref{proposition3.1}
\begin{align*}
||\chi_{F}\zeta^{\alpha}_{t}\left(T\right)\left(\chi_{E}u\right)||_{L^{r}} \lesssim e^{\nu |\Im \alpha|} \left(1+\frac{d\left(E,F\right)}{t}\right)^{-\sigma}||\chi_{E}u||_{L^{r}}
\end{align*}
for all $r\in \mathcal{I}_{D,B}$ and all $\alpha\in \mathbb{C}$ such that $\tau-\Re \alpha>\left(\frac{n}{p}-\frac{n}{q}\right)-\Re \alpha>0$. Now, let $p_{0},q_{0}\in \mathcal{I}_{D,B}$. We have for all $\tau-\Re \alpha>\left(\frac{n}{p}-\frac{n}{q}\right)-\Re \alpha>\left(\frac{n}{p_{0}}-\frac{n}{q_{0}}\right)$ 
\begin{align*}
||\chi_{F}\zeta^{\alpha}_{t}\left(T\right)\left(\chi_{E}u\right)||_{L^{q_{0}}} \lesssim e^{\nu |\Im \alpha|} t^{\frac{n}{q_{0}}-\frac{n}{p_{0}}}||\chi_{E}u||_{L^{p_{0}}}.
\end{align*}
by Claim \ref{claim3.4}. Next, we will use Stein's interpolation theorem for the analytic family of operators $\left\{\zeta^{\alpha}_{t}\left(T\right)\right\}_{\alpha}$ with 
\begin{align}\label{EQ.3.6}
&\frac{1}{p}=\frac{1-\theta}{r}+\frac{\theta}{p_{0}} && \frac{1}{q}=\frac{1-\theta}{r}+\frac{\theta}{q_{0}}.
\end{align}
and $\theta:=\left(\frac{n}{p}-\frac{n}{q}-\Re \alpha\right)\left(\frac{n}{p_{0}}-\frac{n}{q_{0}}\right)^{-1}$ at $\Re \alpha=0$. This yields 
\begin{align*}
||\chi_{F}\zeta^{\alpha}_{t}\left(T\right)\left(\chi_{E}u\right)||_{L^{q}} \lesssim M_{\Im \alpha} t^{c_{1}\left(\frac{n}{q_{0}}-\frac{n}{p_{0}}\right)} \left(1+\frac{d\left(E,F\right)}{t}\right)^{-c_{0}\sigma}||\chi_{E}u||_{L^{p}}
\end{align*}
when $\Re \alpha=0$. The constants $c_{0},c_{1}$ are related to the formula in \cite[Theorem  1.3.7]{Gra}. Choosing $\alpha=0$ yields 
\begin{align}\label{EQ.3.7}
||\chi_{F}g\left(T\right)\psi_{t}\left(T\right)\left(\chi_{E}u\right)||_{L^{q}} \lesssim  t^{c_{1}\left(\frac{n}{q_{0}}-\frac{n}{p_{0}}\right)} \left(1+\frac{d\left(E,F\right)}{t}\right)^{-c_{0}\sigma}||\chi_{E}u||_{L^{p}}
\end{align}
By \cite[Theorem 1.3.7, Exercise 1.3.8]{Gra} we know that 
\begin{align*}
c_{0}&=1-\theta
=1-\left(\frac{1}{p}-\frac{1}{q}\right)\left(\frac{1}{p_{0}}-\frac{1}{q_{0}}\right)^{-1}\\
c_{1}&=\theta
=\left(\frac{1}{p}-\frac{1}{q}\right)\left(\frac{1}{p_{0}}-\frac{1}{q_{0}}\right)^{-1}
\end{align*}
Thus, (\ref{EQ.3.7}) reads 
\begin{align}\label{EQ.3.8}
||\chi_{F}g\left(T\right)\psi_{t}\left(T\right)\left(\chi_{E}u\right)||_{L^{q}} 
\lesssim  t^{\frac{n}{q}-\frac{n}{p}} \left(1+\frac{d\left(E,F\right)}{t}\right)^{-\left(1-\theta\right)\sigma}||\chi_{E}u||_{L^{p}}.
\end{align}
Since $p,q$ are fixed in the relation and $r$ is choosen depending on $p_{0},q_{0},$ the parameter $\theta=\theta\left(p_{0},q_{0}\right)$ is determined by $p_{0},q_{0}$. In order to minimize the factor $\left(1+\frac{d\left(E,F\right)}{t}\right)^{-\sigma\left(1-\theta\right)}$ in (\ref{EQ.3.8}), we minimize $\theta$ using (\ref{EQ.3.6}). Indeed, we get by (\ref{EQ.3.6}) the relation $\theta=\theta\left(p_{0},q_{0}\right):=\left(\frac{1}{p}-\frac{1}{q}\right)\left(\frac{1}{p_{0}}-\frac{1}{q_{0}}\right)^{-1}$ and observe that
\begin{align*} \inf\left\{\theta\left(p_{0},q_{0}\right)|p_{0},q_{0}\in \mathcal{I}_{D,B}\right\}=\left(\frac{1}{p}-\frac{1}{q}\right)\left(\frac{1}{p_{-}\left(D,B\right)}-\frac{1}{p_{+}\left(D,B\right)}\right)^{-1}.
\end{align*}
As we are allowed to choose $p_{-}\left(D,B\right)<p_{0}<q_{0}< p_{+}\left(D,B\right)$ arbritrary in (\ref{EQ.3.6}) we get the estimate 
\begin{align*}
||\chi_{F}g\left(T\right)\psi_{t}\left(T\right)\left(\chi_{E}u\right)||_{L^{q}} 
&\lesssim t^{\frac{n}{q}-\frac{n}{p}}\left(1+\frac{d\left(E,F\right)}{t}\right)^{-K} ||\chi_{E}u||_{L^{p}}.
\end{align*}
for each $K\in \left[0,\infty\right)$ such that $\sigma>Kc_{p,q}$, where
\begin{align*}
c_{p,q}:=\left(1-\left(\frac{1}{p}-\frac{1}{q}\right)\left(\frac{1}{p_{-}\left(D,B\right)}-\frac{1}{p_{+}\left(D,B\right)}\right)^{-1}\right)^{-1}.
\end{align*}
\end{proof}

The next example shows that there are families of operators with finite $\sigma$ in the functional calculus, which satisfy off-diagonal estimates of each order $K\in \left[0,\infty\right)$, showing that the condition $\sigma>cK$ is sufficient but not necessary.

\begin{example}[$L^{p}-L^{q}$ off-diagonal estimates of arbritrary order]\label{example3.7}
Let $T\in\left\{BD,DB\right\}$ and $\alpha,M\in \mathbb{N}^{*}$ with $0<\alpha\leq M$. Then the family $\left\{\left(itT\right)^{\alpha}\left(Id+itT\right)^{-M}\right\}_{t>0}$ satisfies $L^{p}-L^{q}$ off-diagonal estimates of order $K$ for each $K\in \left[0,\infty\right)$ whenever $p,q\in \mathcal{I}_{D,B}$ with $p<q$ such that $M-\alpha>\frac{n}{p}-\frac{n}{q}$.
\end{example}

\begin{remark}
We do not know if the condition $\tau>\frac{n}{p}-\frac{n}{q}$ is necessary in Proposition \ref{proposition3.3}.
\end{remark}

Sometimes, it is appropriate to have the following variant of Proposition \ref{proposition3.3}. See for example \cite{Aus-Sta-2}.
\begin{proposition}[$L^{p}-L^{q}$ biparameter off-diagonal estimates]\label{proposition3.8}
Let $T\in \left\{DB,BD\right\}$. Denote by $\omega:=\omega_{DB}=\omega_{BD}$ the type of bisectoriality and let $\nu\in \left(\omega,\frac{\pi}{2}\right)$. 
\newline
Suppose $p,q\in \mathcal{I}_{D,B}$ such that $p<q$ and let $\sigma>0$, $\tau>\frac{n}{p}-\frac{n}{q}$. Then there exists $c:=c_{p,q}>0$ such that for $0\leq K<\frac{M}{c}$ one has: Suppose that $\psi\in\Psidecpar{\sigma}{\tau}{\nu}$ and $\varphi\in \Hinf{\nu}$ are functions such that $\varphi$ satisfies $|\varphi\left(\lambda\right)|\lesssim \inf \left\{|\lambda|^{M},1\right\}$ for all $\lambda\in \opbisec{\nu}$. Then the family $\left\{\psi_{t}\left(T\right)\varphi_{r}\left(T\right)\right\}_{t\geq r>0}$ satisfies $L^{p}-L^{q}$ biparameter off-diagonal estimates in $\left(t,r\right)$ of order $K$. Moreover, one can choose 
\begin{align}
c_{p,q}=\left(1-\left(\frac{1}{p}-\frac{1}{q}\right)\left(\frac{1}{p_{-}\left(D,B\right)}-\frac{1}{p_{+}\left(D,B\right)}\right)^{-1}\right)^{-1}.
\end{align}
\end{proposition}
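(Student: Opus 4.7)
The proof I would run mirrors that of Proposition \ref{proposition3.3}: embed the target operator in an analytic family on a complex strip containing $0$ in its interior, verify one $L^{\rho}-L^{\rho}$ biparameter off-diagonal bound and one $L^{p_{0}}-L^{q_{0}}$ bound on the two boundary lines, and conclude via Stein's complex interpolation theorem. Concretely, I would use
\begin{equation*}
\zeta^{\alpha}_{t,r}(z) := \psi(tz)(1+t\widetilde{z})^{\alpha}\varphi(rz), \qquad \widetilde{z} = \sgn(z)\,z,
\end{equation*}
so that $\zeta^{0}_{t,r}(T) = \psi_{t}(T)\varphi_{r}(T)$ is the target at $\alpha = 0$. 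Placing the deformation factor at scale $t$ rather than at scale $r$ is what preserves the $\Psi$-class uniformly in $s := t/r \in [1,\infty)$ after the changes of variable used in the endpoint analyses.

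On the line $\Re\alpha = \alpha_{+}$ with $\alpha_{+} \in (0,\tau)$, the substitution $\mu = rz$ writes $\zeta^{\alpha}_{t,r}(T) = G^{\alpha,s}(rT)$, with $G^{\alpha,s}(\mu) := \psi(s\mu)(1+s\widetilde{\mu})^{\alpha}\varphi(\mu)$. A case analysis on $|\mu|$ using $\|\psi\|_{\infty} < \infty$ near zero and the decay $|\psi(s\mu)| \lesssim |s\mu|^{-\tau}$ at infinity (which dominates the growth $(1+s\widetilde{\mu})^{\alpha_{+}} \sim |s\mu|^{\alpha_{+}}$ precisely because $\alpha_{+} < \tau$) shows that $G^{\alpha,s} \in \Psidecpar{M}{\tau-\alpha_{+}}{\nu}$ uniformly in $s \geq 1$, with norm $\lesssim e^{\nu|\Im\alpha|}$; Proposition \ref{proposition3.1} then yields biparameter $L^{\rho}-L^{\rho}$ off-diagonal estimates of order $M$ at scale $r$ for any $\rho \in \mathcal{I}_{D,B}$, uniformly in $t \geq r > 0$. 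On the line $\Re\alpha = \alpha_{-} < 0$ chosen so that $\tau - \alpha_{-} > n/p_{0} - n/q_{0}$, the substitution $\mu = tz$ writes $\zeta^{\alpha}_{t,r}(T) = H^{\alpha,s}(tT)$ with $H^{\alpha,s}(\mu) := \psi(\mu)(1+\widetilde{\mu})^{\alpha}\varphi(\mu/s)$; the trivial bound $|\varphi(\mu/s)| \leq 1$ yields $H^{\alpha,s} \in \Psidecpar{\sigma}{\tau-\Re\alpha}{\nu}$ uniformly in $s \geq 1$, and Claim \ref{claim3.4} (from the proof of Proposition \ref{proposition3.3}) gives $L^{p_{0}}-L^{q_{0}}$ boundedness with scaling $t^{n/q_{0}-n/p_{0}}$ and constant $\lesssim e^{\nu|\Im\alpha|}$.

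Stein's theorem applied to $\{\zeta^{\alpha}_{t,r}(T)\}$ on $\{\alpha_{-} \leq \Re\alpha \leq \alpha_{+}\}$ at the interior point $\Re\alpha = 0$ then produces a geometric-mean bound of the form
\begin{equation*}
\|\chi_{F}\psi_{t}(T)\varphi_{r}(T)\chi_{E}u\|_{L^{q}} \lesssim |t|^{\frac{n}{q}-\frac{n}{p}}\left(1+\frac{d(E,F)}{|r|}\right)^{-(1-\theta)M}\|\chi_{E}u\|_{L^{p}},
\end{equation*}
provided that $\rho \in \mathcal{I}_{D,B}$ is chosen so that $1/p = (1-\theta^{*})/p_{0} + \theta^{*}/\rho$ and $1/q = (1-\theta^{*})/q_{0}+\theta^{*}/\rho$, where $\theta^{*} = |\alpha_{-}|/(\alpha_{+}+|\alpha_{-}|) = 1-\theta$ and $\theta = (1/p-1/q)(1/p_{0}-1/q_{0})^{-1}$; the identity $\theta(n/q_{0}-n/p_{0}) = n/q - n/p$ absorbs the $t$-scaling from the $L^{p_{0}}-L^{q_{0}}$ line. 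Pushing $p_{0} \to p_{-}(D,B)$ and $q_{0} \to p_{+}(D,B)$ (readjusting $\alpha_{-},\alpha_{+},\rho$ along the way) drives $\theta$ down to $1 - 1/c_{p,q}$, so $(1-\theta)M$ approaches $M/c_{p,q}$ and covers every $K < M/c_{p,q}$. The main technical obstacle is the uniform-in-$s$ control of the $\Psi$-class norms of $G^{\alpha,s}$ and $H^{\alpha,s}$, for which the constraint $\alpha_{+} < \tau$ is essential; a secondary subtlety is ensuring the interpolated exponent $\rho$ remains in $\mathcal{I}_{D,B}$ as $p_{0},q_{0}$ approach the endpoints, which one handles by fixing $K$ first with strict slack and then choosing the parameters $p_{0},q_{0},\alpha_{\pm},\rho$ within that slack.
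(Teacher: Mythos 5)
Your proposal is correct and follows the same interpolation strategy the paper uses: embed the target in an analytic family carrying a deformation factor $(1+t\widetilde z)^{\alpha}$ at scale $t$, establish an $L^{\rho}-L^{\rho}$ biparameter off-diagonal bound of order $M$ at scale $r$ on one boundary line and $L^{p_{0}}-L^{q_{0}}$ boundedness (Claim~\ref{claim3.4}) on the other, and apply Stein interpolation at $\Re\alpha=0$, finally letting $p_{0}\to p_{-}(D,B)$, $q_{0}\to p_{+}(D,B)$ to reach any $K<M/c_{p,q}$. The one place you package things differently is the $L^{\rho}-L^{\rho}$ endpoint: the paper proves a separate lemma (Claim~\ref{claim3.9}) by a direct contour-integral estimate for $\psi_{t}(T)\varphi_{r}(T)$, splitting the contour at $|\lambda|=1/x$ and $|\lambda|=1$ with $x=d(E,F)/r$, whereas you rescale by $\mu=rz$ and observe that $G^{\alpha,s}(\mu)=\psi(s\mu)(1+s\widetilde\mu)^{\alpha}\varphi(\mu)$ lies in $\Psidecpar{M}{\tau-\alpha_{+}}{\nu}$ uniformly in $s\ge1$ (using $\alpha_{+}<\tau$), so that Proposition~\ref{proposition3.1} applied in the $r$ variable gives the desired order-$M$ estimate directly. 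These are equivalent — Proposition~\ref{proposition3.1} is itself proved by the same kind of contour estimate — but your route reuses the existing one-parameter lemma rather than redoing the integral, and has the advantage of making explicit the analytic family and the uniform-in-$s$ $\Psi$-class control that the paper's proof only indicates by writing ``as in Proposition~\ref{proposition3.3}''. Your cautionary remarks at the end (that $\alpha_{+}<\tau$ is essential, and that one fixes $K$ with slack before sending $p_{0},q_{0}$ to the endpoints) correctly flag the two real technical points.
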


\begin{proof}
The conclusion of Proposition \ref{proposition3.8} follows by analytic interpolation as in Proposition \ref{proposition3.3}: in fact, we use interpolation between Claim \ref{claim3.4} (that is Proposition \ref{proposition3.3} in the case $K=0$.) and the next claim.
\begin{claim}\label{claim3.9}
With the assumption above and $p=q$ the family $\left\{\psi_{t}\left(T\right)\varphi_{r}\left(T\right)\right\}_{t\geq r>0}$ satisfies $L^{q}-L^{q}$ biparameter off-diagonal estimates in $\left(t,r\right)$ of order $M$.
\end{claim}
\begin{proof}[Proof of Claim \ref{claim3.9}]
W.l.o.g. assume $||\psi||_{\Hinf{\nu}}\leq 1$. Let $u\in L^{q}$ with $\supp u\subset E$. We have for each $\theta\in \left(\omega,\nu\right)$
\begin{align}
||\psi_{t}\left(T\right)\varphi_{r}\left(T\right)u||_{L^{q}\left(F\right)}&\lesssim \int_{\partial \bisec{\theta}} |\varphi\left(r\lambda\right)||\psi\left(t\lambda\right)|||\left(Id-\lambda^{-1}T\right)^{-1}u||_{L^{q}\left(F\right)}\left|\frac{d\lambda}{\lambda}\right| \nonumber\\
&\lesssim \int_{\partial \bisec{\theta}} |\varphi\left(\lambda\right)||\psi\left(\frac{t\lambda}{r}\right)|\left(1+\frac{d\left(E,F\right)}{r}|\lambda|\right)^{-K}\left|\frac{d\lambda}{\lambda}\right| ||u||_{L^{q}\left(E\right)} \nonumber \\
&\lesssim \left(1+\frac{d\left(E,F\right)}{r}\right)^{-M}||u||_{L^{q}\left(E\right)} \label{EQ.3.10}
\end{align}
where $K\in \left[0,\infty\right)$ will be chosen below.
For the proof of (\ref{EQ.3.10}) we consider two cases. On the one hand, if $\frac{d\left(E,F\right)}{r}\leq 1$ we have
\begin{align*}
&\int_{\partial \bisec{\theta}}|\varphi\left(\lambda\right)||\psi\left(\frac{t\lambda}{r}\right)| \left(1+\frac{d\left(E,F\right)}{r}|\lambda|\right)^{-K}\left|\frac{d\lambda}{\lambda}\right|\\
&\leq \int_{\partial \bisec{\theta}}|\varphi\left(\lambda\right)|\cdot \inf\left\{1,|\lambda|^{-\tau}\right\}\left|\frac{d\lambda}{\lambda}\right|\\
&\lesssim 1.
\end{align*}
In fact, the last estimate follows by splitting the contour integral at $|\lambda|=1$ and using that 
\begin{align*}
|\psi\left(\frac{t\lambda}{r}\right)|\lesssim \begin{cases}||\psi||_{\Hinf{\nu}}\leq 1, &\text{if} \ |\lambda|\leq 1,\\
|\frac{t\lambda}{r}|^{-\tau}\leq |\lambda|^{-\tau}, &\text{if} \ |\lambda|\geq 1.
\end{cases}
\end{align*}
On the other hand if $x:=\frac{d\left(E,F\right)}{r}\geq 1$, we split 
\begin{align*}
\int_{\partial \bisec{\theta}}|\varphi\left(\lambda\right)||\psi\left(\frac{t\lambda}{r}\right)| \left(1+\frac{d\left(E,F\right)}{r}|\lambda|\right)^{-K} \left|\frac{d\lambda}{\lambda}\right|
\end{align*}
into three parts according to $|\lambda|\leq \frac{1}{x}$, $\frac{1}{x}\leq |\lambda| \leq 1$ and $|\lambda|\geq 1$. From this the estimate 
\begin{align}\label{eq.3.11}
\int_{\partial \bisec{\theta}}|\varphi\left(\lambda\right)||\psi\left(\frac{t\lambda}{r}\right)| \left(1+\frac{d\left(E,F\right)}{r}|\lambda|\right)^{-K} \left|\frac{d\lambda}{\lambda}\right|
\lesssim \left(\frac{d\left(E,F\right)}{r}\right)^{-M}.
\end{align}
easily follows, required we choose $K>M$. In fact, for the first part we use $\left(1+\frac{d\left(E,F\right)}{r}|\lambda|\right)^{-K}\leq 1$ and for the second and third part we estimate 
$$\left(1+\frac{d\left(E,F\right)}{r}|\lambda|\right)^{-K}\leq \left(\frac{d\left(E,F\right)}{r}\right)^{-K}|\lambda|^{-K}$$ 
for the same choice of $K>M$ and evaluate the three integrals associated to the three parts. The addition of the three evaluated parts is bounded by the right hand side in (\ref{eq.3.11}).
\end{proof}
The lemma is proved.
\end{proof}

For the semigroup $e^{-t|T|}$ for $T\in \left\{BD,DB\right\}$ we can only prove $\crans{p}{T}-\crans{q}{T}$ boundedness whenever $p,q\in\mathcal{I}_{D,B}$ with $p\leq q$. More precisely, we prove that $f_{t}\left(T\right)$ maps $\crans{p}{T}$ to $\crans{q}{T}$, whenever the holomorphic function $f$ has enough decay at infinity. We will apply this result to prove a Hardy-Littlewood-Sobolev inequality for fractional operators $|T|^{-\alpha}$ in the next section.

\begin{corollary}[$L^{p}-L^{q}$ theory for bounded holomorphic functions]\label{corollary3.13}
Let $T\in \left\{DB,BD\right\}$. Denote by $\omega:=\omega_{DB}=\omega_{BD}$ the type of bisectoriality and let $\nu\in \left(\omega,\frac{\pi}{2}\right)$. 
Suppose $p,q\in \mathcal{I}_{D,B}$ such that $p\leq q$, and let $g\in \Hinf{\nu}$ and $f$ be holomorphic function with $|f\left(\lambda\right)|\lesssim \inf\left\{1,|\lambda|^{-M}\right\}$ for all $\lambda\in \opbisec{\nu}$, where $M>\frac{n}{p}-\frac{n}{q}$. Then the family $\left\{g\left(T\right)f_{t}\left(T\right)\right\}_{t>0}$ is $\crans{p}{T}-\crans{q}{T}$ bounded.
\newline
In particular, the semigroup $\left\{e^{-t|T|}\right\}_{t>0}$ is $\crans{p}{T}-\crans{q}{T}$ bounded for all $p,q\in \mathcal{I}_{D,B}$ with $p\leq q$.
\end{corollary}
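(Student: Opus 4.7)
The case $p=q$ is immediate from Theorem \ref{theorem2.5}: the bounded $H^{\infty}$ calculus of $T$ on $L^{p}$ gives $\|g(T)f_{t}(T)u\|_{L^{p}}\lesssim\|g\|_{\Hinf{\nu}}\|f\|_{\Hinf{\nu}}\|u\|_{L^{p}}$ uniformly in $t$, matching the trivial scaling $t^{n/q-n/p}=1$. So I henceforth assume $p<q$, which in particular ensures $n/p-n/q>0$.

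The first main idea is to introduce the holomorphic approximate identity on the range given by $\zeta_{\sigma}(z):=(\widetilde{z}/(1+\widetilde{z}))^{\sigma}\in\Hinf{\nu}$ for small $\sigma>0$: as $\sigma\to 0$, $\zeta_{\sigma}\to 1$ boundedly and pointwise on $\opbisec{\nu}$, whence by the convergence property of the bounded $H^{\infty}$ calculus on $\crans{p}{T}$ (Theorem \ref{theorem2.5}) one has $\zeta_{\sigma}(T)u\to u$ in $L^{p}$ for every $u\in\crans{p}{T}$. The product $f\zeta_{\sigma}$ belongs to $\Psidecpar{\sigma}{M}{\nu}$ (decay $|z|^{\sigma}$ at $0$ from $\zeta_{\sigma}$, decay $|z|^{-M}$ at infinity from $f$), so the plan is to bound $\|g(T)f_{t}(T)\zeta_{\sigma}(T)u\|_{L^{q}}$ with a constant \emph{independent of $\sigma$}, and then pass to the limit $\sigma\to 0$ via lower semicontinuity of the $L^{q}$-norm (equivalently, weak-$\ast$ compactness in $L^{q}$, which is reflexive).

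The $\sigma$-uniform bound is obtained in two steps. Step A handles the single-Sobolev-step range $p\in[q_{*},q]\cap\mathcal{I}_{D,B}$: writing the Dunford representation
\begin{align*}
g(T)f_{t}(T)\zeta_{\sigma}(T)u=\frac{1}{2\pi i}\int_{\partial\bisec{\theta}} g(\lambda)f(t\lambda)\zeta_{\sigma}(\lambda)(Id-\lambda^{-1}T)^{-1}u\,\frac{d\lambda}{\lambda}
\end{align*}
and combining the $L^{p}$--$L^{q}$ resolvent estimate of Claim \ref{claim3.5} with $|g(\lambda)|\leq\|g\|_{\Hinf{\nu}}$ and $|\zeta_{\sigma}(\lambda)|\leq 1$, the substitution $w=t\lambda$ yields
\begin{align*}
\|g(T)f_{t}(T)\zeta_{\sigma}(T)u\|_{L^{q}}\lesssim t^{\frac{n}{q}-\frac{n}{p}}\,\|g\|_{\Hinf{\nu}}\Big(\int_{\partial\bisec{\theta}}|f(w)|\,|w|^{\frac{n}{p}-\frac{n}{q}}\tfrac{|dw|}{|w|}\Big)\|u\|_{L^{p}},
\end{align*}
where the displayed integral is finite by $|f(w)|\lesssim\min(1,|w|^{-M})$ together with $0<n/p-n/q<M$. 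Step B reduces general $p\leq q$ in $\mathcal{I}_{D,B}$ to step A by factoring $f(z)=G(z)(1+\widetilde{z})^{-M'}$ with $G\in\Hinf{\nu}$ and $M'\in(n/p-n/q,M]$, and splitting $(1+\widetilde{z})^{-M'}=\prod_{j=0}^{J-1}(1+\widetilde{z})^{-M_{j}}$ along intermediate exponents $p=p_{0}<\cdots<p_{J}=q$ inside $\mathcal{I}_{D,B}$ with each pair within one Sobolev step ($p_{j}\in[(p_{j+1})_{*},p_{j+1}]$) and $M_{j}>n/p_{j}-n/p_{j+1}$. Applying step A to each factor $h^{(j)}_{t}(T)$ with $h^{(j)}(z)=(1+\widetilde{z})^{-M_{j}}$, and composing with the uniformly $L^{q}$-bounded operator $g(T)G_{t}(T)$ (whose norm is at most $\|g\|_{\Hinf{\nu}}\|G\|_{\Hinf{\nu}}$, independent of $t$), yields the corollary. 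That the image lies in $\crans{q}{T}$ rather than merely $L^{q}$ follows from Remark \ref{rem2.6}(5), since the projections onto the closure of the range agree on $L^{p}\cap L^{q}$.

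The main obstacle is precisely this $\sigma$-uniformity: a direct invocation of Proposition \ref{proposition3.3} on $f\zeta_{\sigma}\in\Psidecpar{\sigma}{M}{\nu}$ would produce a constant controlled by the $\Psi$-seminorm, which blows up as $\sigma\to 0$. Running the Dunford integral by hand and dominating $|\zeta_{\sigma}|\leq 1$ is what makes the constant $\sigma$-stable. For the semigroup $\{e^{-t|T|}\}_{t>0}$ one applies the main statement with $g\equiv 1$ and $f(z)=e^{-\widetilde{z}}\in\Hinf{\nu}$ (any $\nu<\pi/2$), whose exponential decay at infinity gives $|f(\lambda)|\lesssim\min(1,|\lambda|^{-M})$ for every $M>0$, so the hypothesis $M>n/p-n/q$ is satisfied trivially.
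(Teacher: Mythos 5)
Your proof is correct and follows exactly the route the paper indicates (``McIntosh convergence lemma and ideas from the proof in Proposition \ref{proposition3.3}''), supplying the details the paper leaves to the reader: the approximate identity $\zeta_{\sigma}(z)=(\widetilde z/(1+\widetilde z))^{\sigma}$ plays the role of the convergence lemma, the Dunford estimate with $|\zeta_{\sigma}|\le 1$ gives the $\sigma$-uniform single-Sobolev-step bound via Claim \ref{claim3.5}, the factorization along a chain $p=p_{0}<\cdots<p_{J}=q$ mirrors the third step of Proposition \ref{proposition3.3}, and passing to the limit via weak compactness in the reflexive space $L^{q}$ (together with weak closedness of $\crans{q}{T}$) completes the argument. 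The only cosmetic omission is that Claim \ref{claim3.5} is stated for $DB$, so for $T=BD$ one should also invoke the similarity in Remark \ref{rem2.8}, exactly as in the fourth step of the proof of Proposition \ref{proposition3.3}.
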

\begin{proof}
The first part can be proved using McIntosh convergence lemma and ideas from the proof in proposition \ref{proposition3.3}. We let this to the interested reader. The statement for the semigroup follows from the special choice $g=1$ and $f\left(z\right)=e^{-\widetilde{z}}$, where $\widetilde{z}=\sgn\left(\Re z\right)z$ as usual.
\end{proof}

\begin{remark}
In the situation of Corollary \ref{corollary3.13}, the family $\left\{g\left(T\right)f_{t}\left(T\right)\right\}_{t>0}$ is $L^{p}-L^{q}$ bounded whenever $\left(fg\right)\left(0\right)=0$. This follows from Corollary \ref{corollary3.13} and (\ref{EQ.2.4}). We treat the case $\left(fg\right)\left(0\right)\neq0$ in Subsection \ref{subsection3.3}.
\end{remark}

\subsection{Stability under multiplication by cut-off functions and the relation to Ajiev's work}
Let us begin this subsection with definition of stability under multiplication by smooth cut-off functions/cut-off functions.

\begin{definition}
Let $U_{q}$ be a closed subspace of $L^{q}$, $1\leq q <\infty$. 
\begin{itemize}
\item We say $U_{q}$ is stable under multiplication by cut-off functions if for any $u\in U_{q}$ and any characteristic function $\chi_{E}$ to a Borel measurable set $E\subset \mathbb{R}^{n}$ one has $\chi_{E}u\in U_{q}$.
\item We say $U_{q}$ is stable under multiplication by smooth cut-off functions if for any $u\in U_{q}$ and any smooth complex-valued function $\zeta$ with compact support one has $\zeta u\in U_{q}$.
\end{itemize}
\end{definition}

\begin{remark}[Equivalence]
We observe that both notions are equivalent. Indeed, if $U_{q}$ is stable by cut-off functions it is also stable under multiplication by simple functions. Then by an approximation argument and the closedness of $U_{q}$ it follows that $U_{q}$ is stable under multiplication by smooth cut-off functions. Conversely, if  $U_{q}$ is stable under multiplication by smooth cut-off functions, then it follows from the closedness of $U_{q}$ and a mollifier approximation argument that $U_{q}$ is stable under multiplication by cut-off functions.
\end{remark}

\begin{remark}[Relation to Ajiev's work]\label{remark3.16}
\begin{enumerate}
\item\label{Enu1rem3.16} A combination of \cite[Theorem 4.6(a)]{Aji} and \cite[Theorem 4.14]{Aji} imply Corollary \ref{corollary3.13} for a subclass of function pairs $\left(f,g\right)$.
\item\label{Enu2rem3.16} A combination of \cite[Theorem 4.6(b)]{Aji}, \cite[Theorem 4.14]{Aji}, \cite[Lemma 4.13]{Aji} and \cite[Remark 2]{Aji} imply Proposition \ref{proposition3.3} for a subclass of function pairs $\left(\psi,g\right)$, provided that the range $\crans{p}{T}$ is stable under multiplication by (smooth) cut-off functions.
\end{enumerate}
\end{remark}

We never used these notions. However, to compare with \cite{Aji} we investigate whether or not $\crans{p}{D}$ is stable under multiplication by (smooth) cut-off functions.

\begin{definition}
Let $D$ as in Assumption \ref{Assumption2.4} above and $p\in \left(1,\infty\right)$.
\begin{enumerate}
\item We define $V_{p}$ to be the linear subspace of $\mathbb{C}^{N}$ generated by $\dashint_{B} v$ for all balls $B\subset \mathbb{R}^{n}$ and all $v\in \crans{p}{D}$.
\item We define $W_{p'}$ to be the linear subspace of $\mathbb{C}^{N}$ generated by $\dashint_{B} w$ for all balls $B\subset \mathbb{R}^{n}$ and all $w\in \kers{p'}{D^{*}}$
\end{enumerate}
\end{definition}

\begin{remark}\label{Remark 3.17}
By the Lebesgue differentiation theorem any $v\in \crans{p}{D}$ takes values in $V_{p}$ almost everywhere. Similarly, any $w\in \kers{p'}{D^{*}}$ takes values in $W_{p'}$ almost everywhere. Thus, $V_{p}$ is the space of almost everywhere values of all elements in $\crans{p}{D}$ and $W_{p'}$ is the space of values of all elements in $ \kers{p'}{D^{*}}$.
\end{remark}

\begin{remark}
Suppose $1<p,q<\infty$. Then $V_{p}=V_{q}$ and $W_{p'}=W_{q'}$. This follows from the density statements in Lemma \ref{lemma2.8}. Thus, we may set $V=V_{p}$ and $W=W_{p'}$ for one $p\in (1,\infty)$. 
\end{remark}

\begin{proposition}[Stability under multiplication by smooth cut-off functions]\label{proposition..3.18}
Let $D$ as in Assumption \ref{Assumption2.4} above and $p\in \left(1,\infty\right)$.
\begin{enumerate}
\item\label{Enu1prop..3.18} If $\crans{p}{D}=L^{p}$, then $\crans{p}{D}$ is stable under multiplication by smooth cut-off functions.
\item\label{Enu2prop..3.18} If $\crans{p}{D}\neq L^{p}$, then $\crans{p}{D}$ is stable under multiplication by smooth cut-off functions if and only if $V\perp W$ for the $\mathbb{C}^{N}$ inner product.
\end{enumerate}
This implies that (1) or (2) holds for one $p$, it holds for all $p$. 
\end{proposition}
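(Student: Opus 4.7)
The key tool is the duality characterisation of $\crans{p}{D}$: by applying Proposition~\ref{proposition2.9} to $D^{*}$ and standard duality for bisectorial operators admitting a kernel/range decomposition, the annihilator of $\crans{p}{D}$ in $L^{p'}$ under the sesquilinear pairing $\langle u,w\rangle=\int u\cdot\bar w\,dx$ equals $\kers{p'}{D^{*}}$. Hence
\[
u\in\crans{p}{D}\iff\int u\cdot\bar w\,dx=0\quad\text{for every }w\in\kers{p'}{D^{*}}.
\]
By the equivalence of the two notions of stability already recorded, it suffices to test stability against characteristic functions $\chi_{E}$ of Borel sets. Part~(\ref{Enu1prop..3.18}) is then trivial, since when $\crans{p}{D}=L^{p}$ any pointwise $L^{\infty}$-multiplier preserves $L^{p}$.

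For the forward direction of (\ref{Enu2prop..3.18}) I would argue as follows. By Remark~\ref{Remark 3.17} every $u\in\crans{p}{D}$ and every $w\in\kers{p'}{D^{*}}$ take values in $V$ and $W$ almost everywhere respectively, so the hypothesis $V\perp W$ gives $u(x)\cdot\overline{w(x)}=0$ a.e. In particular $\int\chi_{E}u\cdot\bar w\,dx=0$ for every Borel $E$, hence $\chi_{E}u\in\crans{p}{D}$.

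The reverse direction is the heart of the proof and contains the main obstacle. Stability together with the duality characterisation yields, for each fixed pair $(u,w)\in\crans{p}{D}\times\kers{p'}{D^{*}}$, the ``diagonal'' pointwise orthogonality $u(x)\cdot\overline{w(x)}=0$ for a.e.\ $x$. But $V$ and $W$ are generated by averages $\dashint_{B}u$ and $\dashint_{B'}w$ over \emph{different} balls, so to extract $V\perp W$ I need the decoupled statement $u(x)\cdot\overline{w(y)}=0$ for a.e.\ $(x,y)$. This is the step where work is needed; the idea is to exploit translation invariance. Because $D$ has constant coefficients, both $\crans{p}{D}$ and $\kers{p'}{D^{*}}$ are closed under translations, so for every $h\in\mathbb{R}^{n}$ the translated function $\tau_{h}w(\cdot):=w(\cdot-h)$ lies in $\kers{p'}{D^{*}}$. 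Applying the diagonal orthogonality to the pair $(u,\tau_{h}w)$ for each $h$ and invoking Fubini gives $u(x)\cdot\overline{w(x-h)}=0$ for a.e.\ $(x,h)$, which is the desired decoupled orthogonality after the change of variable $y=x-h$. Integrating against $|B|^{-1}|B'|^{-1}\chi_{B}(x)\chi_{B'}(y)$ then produces $\bigl(\dashint_{B}u\bigr)\cdot\overline{\bigl(\dashint_{B'}w\bigr)}=0$, and taking linear combinations yields $V\perp W$.

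Finally, the last assertion (that the property holds for one $p$ iff for all $p$) is immediate from the remark preceding the proposition, which records that $V$ and $W$ are independent of $p$.
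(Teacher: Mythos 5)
Your proposal is correct and follows the same overall route as the paper: reduce stability to the polar-set condition $\langle \chi_{E}u,w\rangle=0$ for all $u\in\crans{p}{D}$, $w\in\kers{p'}{D^{*}}$ and all Borel sets $E$, then recognize this as the pointwise orthogonality $u(x)\cdot\overline{w(x)}=0$ a.e., and finally relate that to $V\perp W$. (The paper uses Fourier transforms of $\zeta v\bar w$ to pass from the polar-set condition to pointwise vanishing; your route via characteristic functions and absolute continuity of the Lebesgue integral is equally valid and a touch more direct.) The genuinely different and valuable part of your writeup is the last step. The paper dismisses it with the single sentence ``As $v,w$ are arbitrary, this is equivalent to $V\perp W$,'' but as you rightly point out, the implication from the diagonal orthogonality $u(x)\cdot\overline{w(x)}=0$ a.e.\ to the orthogonality of the two families of ball averages $\dashint_{B}u\perp\dashint_{B'}w$ does not follow formally; the averages are decoupled and the a.e.\ vanishing is only on the diagonal $x=y$. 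Your use of translation invariance of $\crans{p}{D}$ and $\kers{p'}{D^{*}}$ (which holds precisely because $D$ has constant coefficients), combined with Fubini, is exactly what is needed to upgrade $u(x)\cdot\overline{w(x)}=0$ a.e.\ to $u(x)\cdot\overline{w(y)}=0$ for a.e.\ $(x,y)$, from which $V\perp W$ follows by averaging. So your proposal not only reproduces the paper's argument but makes explicit a nontrivial step the paper leaves implicit, and it correctly identifies that constant coefficients are used at that point.
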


\begin{proof}
Assertion (\ref{Enu1prop..3.18}) is evident, so we turn to the proof of Assertion (\ref{Enu2prop..3.18}).

Since $\crans{p}{D}$ is the polar set to $\kers{p'}{D^{*}}$, we have that $\crans{p}{D}$ is stable by multiplication of smooth cut-off functions if and only if $\left\langle \xi v,w\right\rangle=0$ for all $v\in \crans{p}{D}$, all $w\in \kers{p'}{D^{*}}$ and all smooth cut-off functions $\zeta$. 
We claim that this is equivalent to $ v\overline{w}=0$ almost everywhere for  all $v\in \crans{p}{D}$ and all $w\in \kers{p'}{D^{*}}$. As $v,w$ are arbitrary, this is equivalent to $V\perp W$. We begin with the direction "$\Rightarrow$": If $\left\langle \zeta v,w\right\rangle=0$ for all  such $v,w, \zeta$ then in particular we have   $\left\langle \zeta_{\xi} v,w\right\rangle=0$  for all $\xi \in \mathbb{R}^n$, where $\zeta_{\xi}\left(x\right):=e^{-ix\cdot\xi} \zeta\left(x\right)$. Let us denote by $\mathcal{F}$ the Fourier transform. Then this implies $\mathcal{F}\left(\zeta v\overline{w}\right)\left(\xi\right)=0$ for all $\xi\in \mathbb{R}^n$ by definition of the Fourier transform. As $\zeta v\overline{w}\in L^{1}$, we deduce that $\zeta v\overline{w}=0$ almost everywhere. Choosing all possible $\zeta$, this concludes  the proof of the first direction. We turn to the converse direction. If $ v\overline{w}=0$  almost everywhere for  all $v\in \crans{p}{D}$ and all $w\in \kers{p'}{D^{*}}$ then  $\zeta v\overline{w}=0$ almost everywhere for all these $v,w$ and smooth cut-off $\zeta$, hence $\left\langle \zeta v,w\right\rangle=0$. This shows that $\zeta v$ belongs to the polar set of $\kers{p'}{D^{*}}$, hence 
$\zeta v \in \crans{p}{D}$.  This concludes the proof of the stated equivalence and of  the lemma. 
\end{proof}

\begin{example}
We claim that the spaces $\crans{p}{DB}=\crans{p}{D}$ associated to the operators $DB$ and $D$ in \cite{Aus-Axe-McI-1} and \cite{Aus-Axe}  are not stable under multiplication by smooth cut-off functions. Indeed,  for $\displaystyle D=\begin{pmatrix} 0 & \div \\ -\nabla & 0 \end{pmatrix}$, we have
\begin{align*}
\kers{p'}{D^{*}}=\kers{p'}{D}=\left\{u=\left(0,g\right)\in L^{p'}\left(\mathbb{R}^{n};\mathbb{C}^{m}\oplus[\mathbb{C}^{m}\otimes\mathbb{C}^{n}]\right)\, ;\, 
\div g=0\right\}\ne \{0\},
\end{align*}
hence $W\ne \{0\}$. Next, we have that 
\begin{align*}
\crans{p}{D}=\left\{u=\left(f,g\right)\in L^{p}\left(\mathbb{R}^{n};\mathbb{C}^{m}\oplus[\mathbb{C}^{m}\otimes\mathbb{C}^{n}]\right)\, ;\, 
g=\nabla h, h\in \dot W^{1,p}(\mathbb{R}^{n},\mathbb{C}^{m}) \right\}.
\end{align*}
Let $c\in \mathbb{C}^{m}$ and $\xi\in \mathbb{C}^{m}\otimes\mathbb{C}^{n}= (\mathbb{C}^{m})^n$. Taking  $f\in L^p$ which is constant with value $c$ on some ball and $h\in \dot W^{1,p}$ with 
$h(x)= \sum_{i=1}^n x_{i}\xi_{i}$ in the same ball. We see that $(c, \xi_{1},\ldots, \xi_{n}) \in V$. Thus, $\mathbb{C}^{N}\subset V$ (with $N=m(1+n)$). The claim follows  as we are in the situation of (2) in  Proposition \ref{proposition..3.18} and $W$ is not orthogonal to $V$.

\end{example}

This shows that Ajiev's results do not apply to the main motivating example.

\subsection{$L^{p}-L^{q}$ estimates and the relation to the kernel/range decomposition}\label{subsection3.3}
From the next proposition and example we will learn more about the relation of kernel/range decomposition and the $L^{p}-L^{q}$ boundedness of the related operators in the functional calculus. The proposition shows that $f\left(0\right)=0$ is a necessary condition for functions $f$ to have $L^{p}-L^{q}$ boundedness of the associated operator, whenever the null space is not equal $\left\{0\right\}$. Before we state the proposition we make a definition.

\begin{definition}[Not bounded]
Let $\mathcal{X}$ and $\mathcal{Y}$ be two Banach spaces and $T:\mathcal{X}\rightarrow \mathcal{X}$ be a bounded linear operator. We say $T$ is not bounded from $\mathcal{X}$ to $\mathcal{Y}$ and write $T:\mathcal{X}\nrightarrow \mathcal{Y}$ if there exists $u\in \mathcal{X}$ such that $Tu\notin \mathcal{Y}$ or if there exists no constant $C>0$ such that for all $u\in \mathcal{X}$ holds $||Tu||_{\mathcal{Y}}\leq C ||u||_{\mathcal{X}}$.
\end{definition}

\begin{proposition}[Necessary Condition]\label{proposition3.11}
Let $T\in \left\{DB,BD\right\}$. Denote by $\omega:=\omega_{DB}=\omega_{BD}$ the type of bisectoriality and let $\nu\in \left(\omega,\frac{\pi}{2}\right)$. 
Suppose there exists $r\in \mathcal{I}_{D,B}$ such that $\kers{r}{T}\neq \left\{0\right\}$ and let $f\in \Hinfzer{\nu}$ with $f\left(0\right)\neq0$. Then for all $p,q\in \mathcal{I}_{D,B}$ such that $p\neq q$ we have $f\left(T\right):\kers{p}{T}\nrightarrow L^{q}$. 

In particular, for all $p,q\in \mathcal{I}_{D,B}$ such that $p\neq q$ we have $f\left(T\right):L^{p}\nrightarrow L^{q}$.
\end{proposition}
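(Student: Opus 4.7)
The plan is to use formula (\ref{EQ.2.4}) to reduce the claim to a pure embedding statement and then obstruct that embedding by scaling. For $u\in\kers{p}{T}$ the kernel/range decomposition gives $f(T)u=f(0)u$, so a bound $f(T):\kers{p}{T}\to L^{q}$ is (since $f(0)\neq 0$) equivalent to the inclusion $\kers{p}{T}\hookrightarrow L^{q}$ being bounded, i.e., to $\kers{p}{T}\subset L^{q}$ with a uniform $L^{p}$-$L^{q}$ estimate. Lemma \ref{lemma2.8}(\ref{Enu2lem2.8}) applied with index pairs $(r,p)$ and then $(p,q)$ transfers the hypothesis $\kers{r}{T}\neq\{0\}$ into a nonzero element $u_{0}\in\kers{p}{T}\cap\kers{q}{T}\subset L^{p}\cap L^{q}$.

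For $T=BD$ the argument is direct. The coercivity of $B$ on $\crans{p}{D}$ (used in the proof of Proposition \ref{prop2.7}) forces $\kers{p}{BD}=\kers{p}{D}$, and since $D$ is a first-order constant-coefficient operator the kernel is invariant under the dilations $\sigma_{\lambda}u(x):=u(\lambda x)$, $\lambda>0$. A change of variables yields
\begin{equation*}
\frac{\|\sigma_{\lambda}u_{0}\|_{L^{q}}}{\|\sigma_{\lambda}u_{0}\|_{L^{p}}}=\lambda^{n(1/p-1/q)}\,\frac{\|u_{0}\|_{L^{q}}}{\|u_{0}\|_{L^{p}}},
\end{equation*}
which is unbounded as $\lambda\to\infty$ (when $p<q$) or as $\lambda\to 0$ (when $p>q$), contradicting any uniform $L^{p}$-$L^{q}$ bound on $\kers{p}{BD}$.

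For $T=DB$ the kernel $\kers{p}{DB}=B^{-1}(\kers{p}{D})$ has no direct dilation symmetry, so I would turn to Banach duality. The adjoint $T^{*}=B^{*}D^{*}$ is of $BD$-type for the pair $(D^{*},B^{*})$, which still satisfies Assumption \ref{Assumption2.4}, so the $BD$-identity gives $\kers{p'}{T^{*}}=\kers{p'}{D^{*}}$; moreover, annihilator duality $(\kers{r}{T})^{\perp}=\crans{r'}{T^{*}}$ together with the hypothesis forces $\kers{r'}{D^{*}}\neq\{0\}$. The same annihilator identity with $p$ in place of $r$, combined with the topological decomposition $L^{p'}=\kers{p'}{D^{*}}\oplus\crans{p'}{T^{*}}$ (Theorem \ref{theorem2.5} applied to $T^{*}$), identifies $\kers{p}{T}^{*}$ with $\kers{p'}{D^{*}}$ up to equivalent norms; taking the Banach adjoint of the hypothetical bounded inclusion $\kers{p}{T}\hookrightarrow L^{q}$ therefore gives
\begin{equation*}
\|v\|_{L^{p'}}\lesssim\|v\|_{L^{q'}}\qquad\text{for every }v\in\kers{p'}{D^{*}}\cap L^{q'}.
\end{equation*}
Applying the dilation argument of the previous paragraph to the constant-coefficient operator $D^{*}$ on a nonzero $v_{0}\in\kers{p'}{D^{*}}\cap\kers{q'}{D^{*}}$ violates this dual estimate and closes the contradiction. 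The \emph{in particular} clause then follows at once: an $L^{p}\to L^{q}$ bound on $f(T)$ would restrict to the $\kers{p}{T}\to L^{q}$ bound just excluded.

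The main obstacle is the $T=DB$ case, because $\kers{p}{DB}$ lacks any direct scaling symmetry. The resolution is to observe that the adjoint $(DB)^{*}=B^{*}D^{*}$ is of $BD$-type, so $\kers{p'}{(DB)^{*}}=\kers{p'}{D^{*}}$ regains scale invariance, and duality reduces the problem to the $BD$-type scaling argument on the dual side.
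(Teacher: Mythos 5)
Your proof is correct and follows essentially the same route as the paper's: reduce to the identity $f(T)u=f(0)u$ on the kernel, settle $T=BD$ by the dilation invariance of $\kers{p}{BD}=\kers{p}{D}$ (a constant-coefficient kernel), and transfer the $T=DB$ case to $B^{*}D^{*}$ by annihilator duality so the same scaling argument applies. The paper phrases the $DB$-to-$BD$ reduction slightly more compactly via the equivalence $f(DB):\kers{p}{DB}\to\kers{q}{DB}$ iff $f^{*}(B^{*}D^{*}):\kers{q'}{B^{*}D^{*}}\to\kers{p'}{B^{*}D^{*}}$, but the underlying duality is the one you invoke.
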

\begin{proof}
First, we note by Lemma \ref{lemma2.8}, (\ref{Enu2lem2.8}), that $\kers{r}{T}\neq \left\{0\right\}$ for one $r\in \mathcal{I}_{D,B}$ is equivalent to $\kers{r}{T}\neq \left\{0\right\}$ for all $r\in \mathcal{I}_{D,B}$. Thus we can assume $\kers{p}{T}\neq \left\{0\right\}\neq \kers{q}{T}$ for particular $p,q \in \mathcal{I}_{D,B}$. Since $f\left(T\right)u=f\left(0\right)u$ for all $u\in\kers{p}{T}$ we observe that 
the statement $f\left(T\right):\kers{p}{T}\nrightarrow \LRC{q}{n}{N}$ is equivalent to $f\left(T\right):\kers{p}{T}\nrightarrow \kers{q}{T}$, which we prove next.

We begin with the case $T=DB$ and let $p,q\in \mathcal{I}_{D,B}$. Since $\left(\kers{p}{DB}\right)^{*}=\kers{p'}{B^{*}D^{*}}$ and similarly $\left(\kers{q}{DB}\right)^{*}=\kers{q'}{B^{*}D^{*}}$ we observe that $f\left(DB\right):\kers{p}{DB}\rightarrow \kers{q}{DB}$ is equivalent to $f^{*}\left(B^{*}D^{*}\right):\kers{q'}{B^{*}D^{*}}\rightarrow \kers{p'}{B^{*}D^{*}}$ by duality, where $f^{*}\left(\lambda\right):=\overline{f\left(\overline{\lambda}\right)}$ for $\lambda\in \opbisec{\nu}\cup\left\{0\right\}$. Now, recall also that $p,q\in \mathcal{I}_{D,B}$ is equivalent to $p',q'\in \mathcal{I}_{D^{*},B^{*}}$ by Remark \ref{remark2.6}. Thus it suffices to consider the case $T=BD$.

We turn to the case $T=BD$ and assume that for $p,q\in \mathcal{I}_{D,B}$ the operator $f\left(BD\right)$ defined by the bounded holomorphic functional calculus maps $\kers{p}{BD}$ to $\kers{q}{BD}$, with quantitative estimate
\begin{align*}
&||f\left(BD\right)u||_{L^{q}}\leq C ||u||_{L^{p}}, &&\forall u\in \kers{p}{BD}
\end{align*}
where $C$ is of course independent of $u$. Since $f\left(BD\right)u=f\left(0\right)u$ this estimate turns into
\begin{align}\label{EQ.3.11}
&|f\left(0\right)|||u||_{L^{q}}\leq C ||u||_{L^{p}}, &&\forall u\in \kers{p}{BD}.
\end{align}
Since $\kers{p}{BD}=\kers{p}{D}$ is the null space of the constant coefficient partial differential operator we observe that $u\in \kers{p}{BD}$ is equivalent to $u_{s}\in \kers{p}{BD}$ for all $s >0$ by chain rule, where $u_{s}\left(x\right):=u\left(s x\right)$ for all $s>0$ and all $x\in \mathbb{R}^{n}$. This means the null space $\kers{p}{BD}$ is invariant by rescaling. Thus, if we fix $u\in\kers{p}{BD}$ such that $u\neq0$ we get the inequality 
\begin{align*}
&|f\left(0\right)|||u_{s}||_{L^{q}}\leq C ||u_{s}||_{L^{p}} &&\forall s
\end{align*}
from (\ref{EQ.3.11}) above. But by substitution, this inequality is equivalent to the inequality
\begin{align}\label{EQ.3.12}
&s^{\frac{n}{q}-\frac{n}{p}}\leq  \frac{C||u||_{L^{p}}}{|f\left(0\right)|||u||_{L^{q}}}, &&\forall s
\end{align}
for our fixed $u\in\kers{p}{BD}$ with $u\neq 0$. If $p<q$ we get a contradiction in (\ref{EQ.3.12}) as $s\rightarrow 0$. If $p>q$ we get a contradiction in (\ref{EQ.3.12}) as $s\rightarrow \infty$.
\end{proof}

\begin{example}
In the proof of the last proposition we have seen that operators $f\left(BD\right)$ do not regularize the null space $\kers{p}{BD}$ whenever $f\left(0\right)\neq 0$ and $\kers{p}{BD}\neq \left\{0\right\}$. In the special case of block form operators $\displaystyle BD=\begin{pmatrix} 0 & -\div \\ A\nabla & 0 \end{pmatrix}$ as in \cite[Section 6]{Aus-Sta-1} we have 
\begin{align*}
\kers{p}{BD}=\left\{u=\left(0,g\right)\in L^{p}\left(\mathbb{R}^{n};\mathbb{C}^{m}\oplus[\mathbb{C}^{m}\otimes\mathbb{C}^{n}]\right)\, ;\, 
\div g=0\right\}.
\end{align*}
The interpretation in this special case is that $f\left(BD\right)$ does not regularize the tangential part of functions $\left(0,g\right)$ which satisfy $\div g=0$. In this connection, we note also that the space 
\begin{align*}
\kers{p}{\div}=\left\{g\in L^{p}\left(\mathbb{R}^{n};\mathbb{C}^{m}\otimes\mathbb{C}^{n}\right)\, ;\, 
\div g=0\right\}.
\end{align*}
is invariant by rescaling.
\end{example}

\begin{corollary}[Null space equal zero]\label{corollary3.14}
Let $T\in \left\{DB,BD\right\}$. Denote by $\omega:=\omega_{DB}=\omega_{BD}$ the type of bisectoriality and let $\nu\in \left(\omega,\frac{\pi}{2}\right)$. Further, suppose there exists $r\in \mathcal{I}_{D,B}$ such that $\kers{r}{T}=\left\{0\right\}$.
\begin{enumerate}
\item For all $p,q\in \mathcal{I}_{D,B}$ satisfying $p\leq q$ the semigroup $\left\{e^{-t|T|}\right\}_{t>0}$ is $L^{p}-L^{q}$ bounded.
\item For all $p,q\in \mathcal{I}_{D,B}$ such that $0\leq\frac{n}{p}-\frac{n}{q}<1$ the family $\left\{\left(Id+itT\right)^{-1}\right\}_{t>0}$ satisfies $L^{p}-L^{q}$ off-diagonal estimates of order $K$ for each $K\in \left[0,\infty\right)$.
\end{enumerate}
\end{corollary}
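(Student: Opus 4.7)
The hypothesis propagates from one exponent to every exponent in $\mathcal{I}_{D,B}$: by Lemma \ref{lemma2.8}(\ref{Enu2lem2.8}), $\kers{p}{T}\cap\kers{r}{T}$ is dense in $\kers{p}{T}$, so $\kers{r}{T}=\{0\}$ forces $\kers{p}{T}=\{0\}$ for every $p\in\mathcal{I}_{D,B}$. Consequently the kernel/range decomposition collapses and $L^{p}=\crans{p}{T}$ throughout $\mathcal{I}_{D,B}$. This is the key reduction: every $\crans{p}{T}-\crans{q}{T}$ bound in the earlier results may be read as an $L^{p}-L^{q}$ bound.

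Assertion (1) is then immediate from Corollary \ref{corollary3.13}: that statement already provides $\crans{p}{T}-\crans{q}{T}$ boundedness of $\{e^{-t|T|}\}_{t>0}$ for all $p\leq q$ in $\mathcal{I}_{D,B}$, which under the kernel hypothesis is the desired $L^{p}-L^{q}$ boundedness.

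For assertion (2), I would first apply Corollary \ref{corollary3.13} with $g\equiv 1$ and $f(z)=(1+iz)^{-1}$; since $|f(\lambda)|\lesssim\inf\{1,|\lambda|^{-1}\}$, it gives $M=1$, hence
$$\|(Id+itT)^{-1}u\|_{L^{q_{0}}}\lesssim|t|^{n/q_{0}-n/p_{0}}\|u\|_{L^{p_{0}}}$$
for every $p_{0}\leq q_{0}$ in $\mathcal{I}_{D,B}$ with $n/p_{0}-n/q_{0}<1$ (again using the reduction $L^{p_{0}}=\crans{p_{0}}{T}$, $L^{q_{0}}=\crans{q_{0}}{T}$). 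The case $p_{0}=p,q_{0}=q$ already yields the off-diagonal estimate of order $K=0$. To upgrade to arbitrary $K\geq 0$, I would interpolate the truncated operator $S_{t}:=\chi_{F}(Id+itT)^{-1}\chi_{E}$ by Riesz--Thorin between this $L^{p_{0}}-L^{q_{0}}$ bound and the $L^{r}-L^{r}$ off-diagonal bound of order $K_{0}$ supplied by Theorem \ref{theorem2.5}(\ref{Enu5thm2.5}); equivalently, one may repeat the analytic interpolation carried out at the end of the proof of Proposition \ref{proposition3.3}. Choosing $(1/p,1/q)=(1-\theta)(1/r,1/r)+\theta(1/p_{0},1/q_{0})$ fixes $\theta=(1/p-1/q)/(1/p_{0}-1/q_{0})$ and delivers the correct scaling $|t|^{n/q-n/p}$ together with the off-diagonal factor $(1+d(E,F)/|t|)^{-(1-\theta)K_{0}}$; taking $K_{0}$ large then yields any prescribed $K$.

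The only delicate point is to select $p_{0}<p$ and $q_{0}>q$ close enough to $p$ and $q$ that $p_{0},q_{0}\in\mathcal{I}_{D,B}$, the inequality $n/p_{0}-n/q_{0}<1$ is preserved, and the resulting $r$ (which one checks lies in $(p_{0},q_{0})$) is itself in $\mathcal{I}_{D,B}$ with $1-\theta$ bounded below. The openness of $\mathcal{I}_{D,B}$ combined with the strict inequality $n/p-n/q<1$ makes this bookkeeping entirely routine; I do not foresee any genuine obstacle beyond it.
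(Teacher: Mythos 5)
Your proof is correct and fills in exactly what the paper leaves to the reader (the paper's ``proof'' of Corollary \ref{corollary3.14} is simply ``The easy details are left to the reader''). The reduction via Lemma \ref{lemma2.8}~(\ref{Enu2lem2.8}) is right: $\kers{p}{T}\cap\kers{r}{T}=\{0\}$ is dense in $\kers{p}{T}$, and $\{0\}$ is closed, hence $\kers{p}{T}=\{0\}$ and $L^p=\crans{p}{T}$ throughout $\mathcal{I}_{D,B}$. Assertion~(1) then is Corollary \ref{corollary3.13} read on the nose. For assertion~(2), your use of Corollary \ref{corollary3.13} with $f(z)=(1+iz)^{-1}$ (so $M=1$, valid since $0\leq n/p_0-n/q_0<1$) to get the $L^{p_0}$--$L^{q_0}$ bound, together with Riesz--Thorin interpolation of the fixed operator $\chi_F(Id+itT)^{-1}\chi_E$ against the $L^r$--$L^r$ off-diagonal estimate of arbitrarily large order from Theorem \ref{theorem2.5}~(\ref{Enu5thm2.5}), is the natural argument and is carried out correctly; the scaling bookkeeping checks out, with $\theta(n/q_0-n/p_0)=n/q-n/p$ and off-diagonal exponent $(1-\theta)K_0$.

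Two small remarks on the ``delicate point.'' First, the interpolation exponent $r$ actually satisfies $p<r<q$ (your $p_0<r<q_0$ is implied but weaker), so $r\in\mathcal{I}_{D,B}$ automatically once $p,q\in\mathcal{I}_{D,B}$. Second, there is no need to worry about $1-\theta$ being ``bounded below'' uniformly: for fixed $p_0,q_0$ with $p_0<p<q<q_0$ the value $1-\theta>0$ is a fixed positive number, and since $K_0$ in Theorem \ref{theorem2.5}~(\ref{Enu5thm2.5}) can be taken arbitrarily large, any target $K$ is reachable with $K_0=K/(1-\theta)$. The only genuine constraint is openness of $\mathcal{I}_{D,B}$ and the strict inequality $n/p-n/q<1$, both of which you correctly invoke.
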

\begin{proof}
The easy details are left to the reader.
\end{proof}

 Corollary \ref{corollary3.14} really shows the link between $L^p-L^q$ estimates for $p<q$ and the triviality of the null space.

\begin{example}\cite[Proposition 3.11]{Aus-Sta-2}
Let $T=BD$. Moreover, suppose $n=1$ and $\hat{D}\left(\xi\right)$ is invertible for all $\xi\neq 0$. Then we have $\mathcal{I}_{D,B}=\left(1,\infty\right)$ and for all $p\in \mathcal{I}_{D,B}$, $\kers{p}{BD}=\kers{p}{D}=\left\{0\right\}$. The reader checks that the proof goes through for $B$ satisfying only coercivity instead of strict accretivity. In fact, the coercivity of $B$ suffices to deduce invertibility of $B\in L^{\infty}\left(\mathbb{R},\bnd\left(\mathbb{C}^{N}\right)\right)$ from the Lebesgue differentiation theorem.
\end{example}

\subsection{Analytic extensions}
Sometimes one is interested in complex times for the results above. So, for appropriate $z\in \mathbb{C}\backslash\left\{0\right\}$ and $f\in \Hinfzer{\nu}$ we define $f_{z}\left(\lambda\right):=f\left(z\lambda\right)$, $\lambda\in \opbisec{\nu}\cup\left\{0\right\}$ and treat this topic in the next remark. 
\begin{remark}[Analytic Extension]
One can extend the results ...
\begin{itemize}
\item ... in Proposition \ref{proposition3.3} to families $\left\{g\left(T\right)\psi_{z}\left(T\right)\right\}_{z\in \opbisec{\beta}}$, $\beta\in \left[0,\frac{\pi}{2}-\omega\right)$, provided there exists $\epsilon\in \left(0,\frac{\pi}{2}-\omega-\beta\right)$ such that $\psi\in\Psidecpar{\sigma}{\tau}{\omega+\epsilon+\beta}$ and $g \in \Hinf{\omega+\epsilon+\beta}$,
\item ... in Example \ref{example3.7} to families  $\left\{\left(izT\right)^{\alpha}\left(Id+izT\right)^{-M}\right\}_{z\in \opbisec{\beta}}$, $\beta\in \left[0,\frac{\pi}{2}-\omega\right)$,
\item ... in Corollary \ref{corollary3.13} to families $\left\{g\left(T\right)f_{z}\left(T\right)\right\}_{z\in \opbisec{\beta}}$, $\beta\in \left[0,\frac{\pi}{2}-\omega\right)$, provided there exists $\epsilon\in \left(0,\frac{\pi}{2}-\omega-\beta\right)$ such that $g,f \in \Hinfzer{\omega+\epsilon+\beta}$ and $f$ satisfies $|f\left(\lambda\right)|\lesssim \inf\left\{1,|\lambda|^{-M}\right\}$ for all $\lambda\in \opbisec{\omega+\epsilon+\beta}$. In particular, the family $\left\{e^{-z|T|}\right\}_{z\in \opsec{+}{\beta}}$, $\beta\in \left[0,\frac{\pi}{2}-\omega\right)$ is $\crans{p}{T}-\crans{q}{T}$ bounded.
\item ... in Corollary \ref{corollary3.14} to the families $\left\{e^{-z|T|}\right\}_{z\in \opsec{+}{\beta}}$ and $\left\{\left(Id+izT\right)^{-1}\right\}_{z\in \opbisec{\beta}}$, $\beta\in \left[0,\frac{\pi}{2}-\omega\right)$.
\end{itemize}
\end{remark}
\begin{proof}
One can adapt the strategies in \cite[Chapter 3.6]{Aus-1}. Details are left to the interested reader.
\end{proof}

\section{An Application}
Here, we will essentially follow \cite[Section 6.2]{Aus-1}\footnote{We mention there are some inaccuracies in this argument that our argument fixes.} to prove $L^{p}-L^{q}$ estimates for the fractional operators $|DB|^{-\alpha}$ and $|BD|^{-\alpha}$ with some simplifications in the final limiting argument. We begin with the definition of $|T|^{-\alpha}$ for $T\in\left\{DB,BD\right\}$ and $\alpha\in \mathbb{C}$, $0<\Re \alpha<\infty$. 
Fix $p,q$ with $\Re \alpha=\frac{n}{p}-\frac{n}{q}$. 
For  $h\in \rans{p}{T}\cap \crans{q}{T}$, define
\begin{align}\label{EQ.4.1}
|T|^{-\alpha}h
:=\frac{1}{\Gamma\left(\alpha\right)}\int_{0}^{\infty}t^{\alpha-1}e^{-t|T|}hdt
=\frac{1}{\Gamma\left(\alpha\right)}\lim_{\left(\epsilon,R\right)\rightarrow \left(0,\infty\right)}\int_{\epsilon}^{R}t^{\alpha-1}e^{-t|T|}hdt,
\end{align}
and observe that the improper Riemann integral converges in the strong sense in $\crans{q}{T}$ with respect to $L^{q}$ topology. Indeed, convergence at 0 follows from $h\in \crans{q}{T}$ and $\Re \alpha>0$, and convergence at $\infty$ follows from  $\Re \alpha=\frac{n}{p}-\frac{n}{q}$ and 
$\|e^{-t|T|}h\|_{L^q}\lesssim t^{\frac{n}{q}-\frac{n}{p}}t^{-1}$ by writing $h=Tf$ with $f\in L^p$ and using $\crans{p}{T}$ to $\crans{q}{T}$ boundedness of the semigroup. 

The result  we want to prove in this section is the following Hardy-Littlewood-Sobolev inequality, which is the analogue to \cite[Section 6.2]{Aus-1}.

\begin{theorem}[Hardy-Littlewood-Sobolev inequality]\label{theorem4.1}
Let $T\in \left\{DB,BD\right\}$. 
Suppose $p_{-}\left(D,B\right)<p<q<p_{+}\left(D,B\right)$. Then $|T|^{-\alpha}$ has a bounded extension from $\crans{p}{T}$ to $\crans{q}{T}$ whenever $\Re \alpha=\frac{n}{p}-\frac{n}{q}$.
\end{theorem}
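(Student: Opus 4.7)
The plan is to follow the classical Hardy--Littlewood--Sobolev strategy, now executed abstractly via the semigroup $e^{-t|T|}$ whose $L^{p}$-$L^{q}$ behaviour is governed by Corollary \ref{corollary3.13}. I would first reduce to the case of real $\alpha$, and handle the imaginary part by composition with the bounded holomorphic functional calculus.

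So fix real $\alpha=\frac{n}{p}-\frac{n}{q}\in(0,1)$ and work on the dense subspace $\rans{p}{T}\cap\rans{q}{T}$ of $\crans{p}{T}$ (by Lemma \ref{lemma2.8}\,(\ref{Enu3lem2.8})), where the integral (\ref{EQ.4.1}) is unambiguously defined. For $h$ in this subspace and $s>0$, split
\begin{align*}
\Gamma(\alpha)\,|T|^{-\alpha}h=\int_{0}^{s}t^{\alpha-1}e^{-t|T|}h\,dt+\int_{s}^{\infty}t^{\alpha-1}e^{-t|T|}h\,dt=:A_{s}h+B_{s}h.
\end{align*}
Pick some $q_{0}\in(q,p_{+}(D,B))$. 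Corollary \ref{corollary3.13} applied with the pairs $(p,p)$ and $(p,q_{0})$ gives
\begin{align*}
\|A_{s}h\|_{L^{p}}\lesssim s^{\alpha}\|h\|_{L^{p}},\qquad
\|B_{s}h\|_{L^{q_{0}}}\lesssim\Bigl(\int_{s}^{\infty}t^{\alpha-1+\frac{n}{q_{0}}-\frac{n}{p}}dt\Bigr)\|h\|_{L^{p}}=c\,s^{\frac{n}{q_{0}}-\frac{n}{q}}\|h\|_{L^{p}},
\end{align*}
the second integral converging because $q_{0}>q$. Then, for any $\lambda>0$, Chebyshev gives
\begin{align*}
|\{||T|^{-\alpha}h|>2\lambda\}|\lesssim \lambda^{-p}s^{p\alpha}\|h\|_{L^{p}}^{p}+\lambda^{-q_{0}}s^{q_{0}(\frac{n}{q_{0}}-\frac{n}{q})}\|h\|_{L^{p}}^{q_{0}},
\end{align*}
and the choice $s=(\lambda/\|h\|_{L^{p}})^{-q/n}$ balances the two terms and yields the weak-type bound $|T|^{-\alpha}:L^{p}\to L^{q,\infty}$ with the right scaling.

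Since $p,q$ lie in the interior of $\mathcal{I}_{D,B}$, the same argument applies to any nearby pair $(p_{i},q_{i})$, $i=1,2$, with $p_{1}<p<p_{2}$ and $\frac{1}{p_{i}}-\frac{1}{q_{i}}=\frac{\alpha}{n}$ (both pairs still in $\mathcal{I}_{D,B}$). Marcinkiewicz interpolation between these two weak-type endpoints upgrades the estimate to the strong bound
\begin{align*}
\||T|^{-\alpha}h\|_{L^{q}}\lesssim \|h\|_{L^{p}},\qquad h\in\rans{p}{T}\cap\rans{q}{T},
\end{align*}
which extends by density to $\crans{p}{T}$, with values in $\crans{q}{T}$.

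For complex $\alpha=\sigma+i\tau$ with $\sigma=\frac{n}{p}-\frac{n}{q}$, the bounded holomorphic function $\lambda\mapsto\widetilde{\lambda}^{-i\tau}$ on $\opbisec{\nu}$ has $H^{\infty}$-norm $\lesssim e^{\nu|\tau|}$, so by Theorem \ref{theorem2.5}\,(\ref{Enu4thm2.5}) the operator $|T|^{-i\tau}$ is bounded on $\crans{p}{T}$, and composition of the functional calculus gives $|T|^{-\alpha}=|T|^{-\sigma}|T|^{-i\tau}$ on the range. Combined with the real case this yields the claim. The main technical point is keeping the weak-type argument uniform in a small neighborhood of the pair $(p,q)$ along the Sobolev line so that Marcinkiewicz applies; the rest, including the complex step and the density extension, is essentially bookkeeping within the calculus developed in Section 3.
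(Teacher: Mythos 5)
Your overall strategy is the same as the paper's: split the Bochner integral at a scale $s$, use the $\crans{p}{T}$-$\crans{q_i}{T}$ boundedness of $e^{-t|T|}$ (Corollary~\ref{corollary3.13}) on each piece, optimize via Chebyshev to get a weak-type $p$-$q$ bound, then interpolate. Your splitting (one piece controlled in $L^p$, the tail in $L^{q_0}$ with $q_0>q$) is a minor cosmetic variant of the paper's (which takes $p<q_0<q<q_1<p_+$ and splits symmetrically around $q$); both yield the same weak-type bound. The separation of $\alpha$ into real and imaginary parts is unnecessary — since $|t^{\alpha-1}|=t^{\Re\alpha-1}$, the argument runs verbatim for complex $\alpha$, as the paper does — but it is not incorrect.

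There is, however, a genuine gap in the interpolation step. You invoke ``Marcinkiewicz interpolation between the two weak-type endpoints,'' but Marcinkiewicz's proof truncates $h$ as $h\,\chi_{\{|h|>c\}}+h\,\chi_{\{|h|\le c\}}$, and this decomposition does \emph{not} preserve membership in $\crans{p}{T}$: for the operators $D$, $DB$, $BD$ under consideration, $\crans{p}{T}$ is in general \emph{not} stable under multiplication by cut-off functions (this is exactly the content of Proposition~\ref{proposition..3.18} and the ensuing example). So you cannot apply the classical Marcinkiewicz theorem to an operator defined only on a range subspace. The paper gets around this by explicitly noting that the family $\{\crans{p}{T}\}_p$ is a real interpolation scale (this comes from $\crans{p}{D}$ being real interpolation spaces, cited to \cite{Hyt-McI}, combined with $\crans{p}{DB}=\crans{p}{D}$ and the similarity $\crans{p}{BD}\cong\crans{p}{DB}$ via $B$), and then performing abstract real interpolation of the uniformly bounded truncated operators $\mathbb{T}_{\epsilon,R}:\crans{p_i}{T}\to L^{q_i,\infty}$. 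You need to insert this observation; without it, the upgrade from weak to strong type does not follow.

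A smaller technical remark: the paper works with truncations $\mathbb{T}_{\epsilon,R}$ and passes to the limit at the very end, which makes the a priori boundedness and the final density/extension step cleaner; you work directly with the improper integral on $\rans{p}{T}\cap\rans{q}{T}$, which also works given the convergence discussion after~(\ref{EQ.4.1}), but you should be slightly more careful that the limiting extension indeed lands in $\crans{q}{T}$ (the paper records this explicitly).
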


\begin{proof}
Fix $\Re \alpha:=\frac{n}{p}-\frac{n}{q}$. Set $\mathbb{T}_{\epsilon,R}h:=\frac{1}{\Gamma\left(\alpha\right)}\int_{\epsilon}^{R}t^{\alpha-1}e^{-t|T|}hdt$ 
for $h\in \crans{p}{T}$ and $0<\epsilon<R<\infty$. The first step is to establish  the weak type $p-q$ estimate for $\mathbb{T}_{\epsilon,R}$ from  $\crans{p}{T}$ to $L^{q,\infty}$ uniformly in $\epsilon,R$.  Choose $q_{0},q_{1}$ with $p<q_{0}<q<q_{1}<p_{+}\left(D,B\right)$. Since the semigroup $e^{-t|T|}$ is bounded from $\crans{p}{T}$ to $\crans{q_{0}}{T}$ and to $\crans{q_{1}}{T}$ we get for $h\in \crans{p}{T}$ with $||h||_{L^{p}}=1$, 
\begin{align*}
&||\int_{b}^{R}t^{\alpha-1} e^{-t|T|}h\, dt||_{L^{q_{1}}}\leq \int_{b}^{R}t^{\frac{n}{p}-\frac{n}{q}-1}||e^{-t|T|}h||_{L^{q_{1}}}\,dt\\
&\leq C\int_{b}^{R}t^{\frac{n}{p}-\frac{n}{q}-1}t^{\frac{n}{q_{1}}-\frac{n}{p}}\,dt||h||_{L^{p}}\leq Cb^{\frac{n}{q_{1}}-\frac{n}{q}},
\end{align*}
and similarly
\begin{align*}
&||\int_{\epsilon}^{b}t^{\alpha-1}e^{-t|T|}h \, dt||_{L^{q_{0}}}\leq Cb^{\frac{n}{q_{0}}-\frac{n}{q}},
\end{align*}
uniformly for $\epsilon, b,  R$ such that $0<\epsilon<b<R<\infty$. Hence, 
for all $\lambda>0$ we get from Tchebycheff's inequality 
\begin{align*}
\left|\left\{|\mathbb{T}_{\epsilon,R}h|>\lambda\right\}\right|&\leq \left|\left\{\left|\int_{b}^{R}t^{\alpha-1} e^{-t|T|}h \, dt\right| >\frac{\lambda}{2}\right\}\right|
+\left|\left\{\left|\int_{\epsilon}^{b}t^{\alpha-1}e^{-t|T}h\, dt\right| >\frac{\lambda}{2}\right\}\right|\\
&\leq C\lambda^{-q_{1}}b^{q_{1}\left(\frac{n}{q_{1}}-\frac{n}{q}\right)}
+C\lambda^{-q_{0}}b^{q_{0}\left(\frac{n}{q_{0}}-\frac{n}{q}\right)}.
\end{align*}
Thus, if we choose $b^{-\frac{n}{q}}=\lambda$, we get 
\begin{align*}
&&\left|\left\{|\mathbb{T}_{\epsilon,R}h|>\lambda\right\}\right|\leq C\lambda^{-q}&& \forall \lambda\in \big(R^{-\frac{n}{q}},\epsilon^{-\frac{n}{q}}\big). 
\end{align*}
Similarly, one proves in the case $\lambda\leq R^{-\frac{n}{q}}$
\begin{align*}
\left|\left\{|\mathbb{T}_{\epsilon,R}h|>\lambda\right\}\right|
\leq C\lambda^{-q_{0}}R^{q_{0}\left(\frac{n}{q_{0}}-\frac{n}{q}\right)}
\leq C\lambda^{-q}
\end{align*}
and in the case $\lambda\geq \epsilon^{-\frac{n}{q}}$ 
\begin{align*}
\left|\left\{|\mathbb{T}_{\epsilon,R}h|>\lambda\right\}\right|
\leq C\lambda^{-q_{1}}\epsilon^{q_{1}\left(\frac{n}{q_{1}}-\frac{n}{q}\right)}
\leq C\lambda^{-q}
\end{align*}
to deduce the inequality 
\begin{align*}
&\left|\left\{|\mathbb{T}_{\epsilon,R}h|>\lambda\right\}\right|\leq C\lambda^{-q}, &&\forall \lambda\in \left(0,\infty\right).
\end{align*}

The second step is to proceed by real interpolation. Observe that the spaces  $\crans{p}{D}$ are real interpolation spaces for $1<p<\infty$ (This is shown in \cite{Hyt-McI}). For $p_{-}\left(D,B\right)<p<p_{+}\left(D,B\right)$, we have $\crans{p}{DB}=\crans{p}{D}$ and that $\crans{p}{DB}$ and $\crans{p}{BD}$ are similar spaces under multiplication by $B$ (Remark \ref{rem2.8}). Thus, the real interpolation property holds for  $\crans{p}{T}$ when  $p_{-}\left(D,B\right)<p<p_{+}\left(D,B\right)$ for  $T\in \left\{DB,BD\right\}$. Consider now a pair $(p,q)$ with $\Re \alpha=\frac{n}{p}-\frac{n}{q}$ and $p_{-}\left(D,B\right)<p<q<p_{+}\left(D,B\right)$. It is possible to pick two pairs $(p_{0},q_{0})$ and $(p_{1}, q_{1})$ with the same properties and, in addition, $p_{0}<p<p_{1}$ and $q_{0}<q<q_{1}$. By real interpolation, the weak type $p_{i}-q_{i}$ estimates yield the strong type $p-q$ estimate, in the sense that $ \mathbb{T}_{\epsilon,R}$ maps $\crans{p}{T}$ to $L^q$, uniformly over $0<\epsilon<R<\infty$. 

The last step is a limiting argument. Assume  $h\in \rans{p}{T}\cap \crans{q}{T}$. We know that  $\mathbb{T}_{\epsilon,R}h$ converges strongly to $|T|^{-\alpha}h$ in $\crans{q}{T}$ by construction as $\epsilon \to 0$ and $R\to \infty$. 
As we just showed 
$$\sup_{0<\epsilon<R<\infty}||\mathbb{T}_{\epsilon,R}h||_{L^{q}}\leq C||h||_{L^{p}},$$ 
we deduce 
\begin{align*}
|||T|^{-\alpha}h||_{L^q}
\leq C||h||_{L^{p}}.
\end{align*}
By density of $\rans{p}{T}\cap \crans{q}{T}$ in $\crans{p}{T}$ for the $L^p$ topology (Lemma \ref{lemma2.8}, item (3)), $|T|^{-\alpha}$ has a bounded extension from $\crans{p}{T}$ to $L^q$. To see it maps into  $\crans{q}{T}$, we observe that  for  $h\in \rans{p}{T}\cap \crans{q}{T}$, $|T|^{-\alpha}h$ is by construction the limit in $L^q$ of elements in $\crans{q}{T}$. Thus this remains by density for all $h\in   \crans{p}{T}$. 
\end{proof}

\section{Acknowledgments} This work is part of the forthcoming PhD thesis of the author. The author was partially supported by the ANR project ``Harmonic Analysis at its Boundaries``, ANR-12-BS01-0013-01. He thanks Pascal Auscher for many discussions and suggestions on this article.

\end{document}